\def\vint_#1{\mathchoice%
          {\mathop{\kern 0.2em\vrule width 0.6em height 0.69678ex depth -0.58065ex
                  \kern -0.8em \intop}\nolimits_{\kern -0.4em#1}}%
          {\mathop{\kern 0.1em\vrule width 0.5em height 0.69678ex depth -0.60387ex
                  \kern -0.6em \intop}\nolimits_{#1}}%
          {\mathop{\kern 0.1em\vrule width 0.5em height 0.69678ex
              depth -0.60387ex
                  \kern -0.6em \intop}\nolimits_{#1}}%
          {\mathop{\kern 0.1em\vrule width 0.5em height 0.69678ex depth -0.60387ex
                  \kern -0.6em \intop}\nolimits_{#1}}}
\def\vintslides_#1{\mathchoice%
          {\mathop{\kern 0.1em\vrule width 0.5em height 0.697ex depth -0.581ex
                  \kern -0.6em \intop}\nolimits_{\kern -0.4em#1}}%
          {\mathop{\kern 0.1em\vrule width 0.3em height 0.697ex depth -0.604ex
                  \kern -0.4em \intop}\nolimits_{#1}}%
          {\mathop{\kern 0.1em\vrule width 0.3em height 0.697ex de pth -0.604ex
                  \kern -0.4em \intop}\nolimits_{#1}}%
          {\mathop{\kern 0.1em\vrule width 0.3em height 0.697ex depth -0.604ex
                  \kern -0.4em \intop}\nolimits_{#1}}}
\numberwithin{equation}{section}
\newtheorem{theorem}{Theorem}[section]
\newtheorem{lemma}[theorem]{Lemma}
\newtheorem{corollary}[theorem]{Corollary}
\newtheorem{proposition}[theorem]{Proposition}
\theoremstyle{definition}
\newtheorem{definition}[theorem]{Definition}
\newtheorem{remark}[theorem]{Remark}
\newtheorem{convention}[theorem]{Convention}
\renewcommand{\d}{{\mathrm{\,d}}}
\newcommand{\R}{\mathbb{R}}
\newcommand{\p}{\mathcal{P}}
\newcommand{\spt}{\mathrm{supp}}
\newcommand{\opt}{\mathrm{Opt}}
\newcommand{\optgeo}{\mathrm{OptGeo}}
\newcommand{\loc}{\mathrm{loc}}
\newcommand{\cd}{\mathrm{CD}}
\newcommand{\im}{\mathrm{Im}}
\newcommand{\m}{\mathfrak{m}}
\newcommand{\q}{\mathfrak{q}}
\newcommand{\geo}{\mathrm{Geo}}
\newcommand{\mcp}{\mathrm{MCP}}
\renewcommand{\L}{\mathcal{L}}
\begin{document}

\begin{abstract}
We establish the local-to-global property of the synthetic curvature-dimension condition for essentially non-branching locally finite metric-measure spaces, extending the work [F. Cavalletti, E. Milman \textit{Invent. Math.} 226 (2021), no. 1, 1-137].
\end{abstract}
\title[The Globalization Theorem for $\cd (K, N)$
on locally finite Spaces]{The Globalization Theorem for $\cd (K, N)$
on locally finite Spaces}

\author{Zhenhao Li}

\address{Fakultät für Mathematik \\
         Universität Bielefeld \\
         Postfach 100131 \\
         DE-33501 Bielefeld \\
         Germany}
\email{zhenhao.li@math.uni-bielefeld.de}

\thanks{}
\subjclass[2000]{}

\maketitle
\section{Introduction}\label{sec:Introduction}
The \textit{curvature-dimension condition}, or shortly $\cd (K, N)$ on metric-measure spaces $(X,d,\m)$, was introduced by Lott-Villani and Sturm in the seminal papers \cite{lott-villani, sturm2006--1, Sturm2006}. 

A natural but longstanding question is whether such a synthetically defined condition can be checked locally.
Cavalletti-Milman's recent paper \cite{cavalletti2021globalization} gives a
positive answer to this globalization problem under the assumption $\mathfrak{m}(X)=1$, which was conjectured to be merely technical there.
In this paper, we extend this result to infinite-volume spaces.

\begin{theorem}[Local-to-Global property]\label{thm:local-to-global}
  Let $(X, d, \mathfrak{m})$ be an essentially non-branching metric-measure space\footnote{In the following sections, we will use the abbreviations \textit{e.n.b.} and \textit{m.m.s.} for \textit{essentially non-branching} and \textit{metric-measure space}, resp.} with a locally finite Borel measure $\mathfrak{m}$. 
  Assume that $(\spt(\mathfrak{m}), d)$ is a length space. 
  Then if $(X, d, \mathfrak{m})$ verifies
  $\cd_{\loc} (K, N)$ for $K \in \mathbb{R}$ and $N \in (1, \infty)$, it verifies $\cd (K, N)$.
\end{theorem}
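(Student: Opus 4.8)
The plan is to adapt the Cavalletti–Milman strategy, whose core is a localization (needle-decomposition) argument combined with a change-of-variables along $L^1$-optimal transport, to the locally finite setting by exhausting the space with sets of finite measure. Concretely, to verify $\cd(K,N)$ it suffices to check the displacement-convexity inequality for the Rényi entropy along a single $W_2$-geodesic $(\mu_t)$ connecting two compactly supported probability measures $\mu_0,\mu_1\ll\m$. First I would fix such $\mu_0,\mu_1$, let $\optgeo$ denote the associated optimal dynamical plan, and choose a bounded open set $\Omega$ with $\overline\Omega$ containing the (compact) union $\bigcup_{t\in[0,1]}\spt(\mu_t)$ together with a uniform neighborhood of it; since $\m$ is locally finite, $\m\llcorner\Omega$ is a finite measure, and after normalization $(X,d,\frac{1}{\m(\Omega)}\m\llcorner\Omega)$ is a genuine probability m.m.s. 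The key point is that $\cd_{\loc}(K,N)$ is a purely local condition, so it is inherited by $(\Omega, d, \m\llcorner\Omega)$ (restricting to small enough neighborhoods, contained in $\Omega$, on which the local condition already holds); and essential non-branching, being about the ambient geodesics, also passes to the restriction.

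The technical subtlety is that $(\spt(\m\llcorner\Omega), d)$ need not be a length space — geodesics of $(X,d)$ between points of $\Omega$ may exit $\Omega$ — so one cannot directly invoke Cavalletti–Milman on the restricted space. The remedy is to work along the structure that their proof actually uses: the $L^1$-optimal transport localization. I would pick a $1$-Lipschitz Kantorovich potential $u$ for the $L^1$-transport problem between $\mu_0$ and $\mu_1$ (supported in $\Omega$), perform the disintegration of $\m$ (equivalently $\m\llcorner\Omega$) into conditional one-dimensional measures $\{\m_q\}_{q\in\mathcal Q}$ supported on the transport rays, and recall the central structural result (Cavalletti–Milman, used as a black box here in the probability case, applied to the finite-measure localization) that each $\m_q$ satisfies the one-dimensional $\cd(K,N)$ density inequality on its ray. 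Because transport rays have uniformly bounded length (they lie in the compact transport set inside $\Omega$), and the needle densities are determined by the \emph{local} curvature-dimension behavior, the passage from $\cd_{\loc}$ to $\cd$ on each needle is exactly the one-dimensional globalization already contained in the cited work; the length-space hypothesis is used precisely to guarantee that the transport set is geodesically "filled in" and that the ray structure is non-degenerate, which is why I need the full space $(\spt(\m),d)$ to be a length space rather than just $\Omega$.

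Having obtained the needle decomposition with each conditional measure $\cd(K,N)$ on its (compact) ray, the last step is standard: reassemble. The displacement convexity of the entropy along $(\mu_t)$ for the global space follows by integrating the one-dimensional displacement-convexity estimates over the quotient $\mathcal Q$ with respect to the quotient measure $\q$, using that the $L^1$-optimal transport geometry is compatible with the $W_2$-geodesic in the essentially non-branching case (the monotone rearrangement along each ray realizes the restriction of the $W_2$-geodesic, up to the standard measurable-selection and gluing arguments). Since $\mu_0,\mu_1$ were arbitrary compactly supported measures absolutely continuous w.r.t.\ $\m$, and such measures are $W_2$-dense enough to characterize $\cd(K,N)$ for locally finite $\m$, this yields $\cd(K,N)$ for $(X,d,\m)$.

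The main obstacle I anticipate is \emph{not} the exhaustion bookkeeping but making the localization genuinely work when $\m$ is infinite: the disintegration theorem and the "each needle is $\cd(K,N)$" statement in Cavalletti–Milman are proved under $\m(X)=1$, and transplanting them requires checking that the $L^1$-transport set between \emph{fixed compactly supported} marginals, together with the relevant Kantorovich potential and ray map, is entirely captured by a finite-measure piece — and that the conditional measures one gets are the same whether computed in $(X,\m)$ or in $(\Omega,\m\llcorner\Omega)$. One must also ensure $\m$-a.e.\ ray is nondegenerate and the branching-set is $\m$-null, which again reduces to the finite-measure restriction but needs the length-space property of the \emph{whole} space to control rays near $\partial\Omega$. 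Once these localization ingredients are in place for locally finite $\m$, the curvature inequalities propagate verbatim.
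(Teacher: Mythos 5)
Your high-level plan — reduce to the Cavalletti--Milman machinery via a needle decomposition, handling the loss of $\m(X)=1$ by some finite-measure normalization — is the right starting point, and you correctly identify the length-space obstruction to naively restricting $\m$ to a bounded $\Omega$. But there are two genuine gaps.

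First, the normalization. The paper does \emph{not} restrict $\m$ to a bounded set: because transport rays of a general $1$-Lipschitz $u$ may be unbounded and the transport set $\mathcal{T}^b_u$ may have infinite measure, one instead multiplies $\m\llcorner\mathcal{T}^b_u$ by a strictly positive weight $f$ built from annular exhaustion so that $f\m\llcorner\mathcal{T}^b_u$ is a \emph{global} probability measure. The disintegration theorem is applied to that normalized measure, and then one divides by $f$ ray-by-ray; the outcome is that the conditional measures $\m_q$ are uniformly locally finite (this is exactly condition \eqref{itm:locallyfinite} of the paper's $\cd^1(K,N)$), not probabilities. This global re-weighting is what lets the needle decomposition go through on the ambient length space, which is precisely what your $\Omega$-restriction cannot provide. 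Your own remark about geodesics exiting $\Omega$ is the symptom of this; the cure is not to restrict, it is to re-weight.

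Second, and more seriously, your ``reassemble'' step is not standard and your compatibility claim is false. You assert that the $L^1$-optimal transport rays between $\mu_0,\mu_1$ are compatible with the $W_2$-geodesic and that the monotone rearrangement along each ray realizes the restriction of $(\mu_t)$; this does not hold -- the $d$-cost and $d^2/2$-cost transports have different geometries and there is no such ray-wise realization in general. The actual Cavalletti--Milman proof (and the bulk of this paper, Sections 4.2--4.4) does not apply a needle decomposition to the $L^1$-transport between $\mu_0$ and $\mu_1$ at all. Instead, for each intermediate time $s$ one forms the signed distance function $d_{\varphi_s-a_s}$ from the level sets $\{\varphi_s=a_s\}$ of the intermediate-time Kantorovich potential, uses that Kantorovich geodesics with $\varphi_s(\gamma_s)=a_s$ travel along its transport rays, and produces an $L^1$-disintegration $\m^{a_s}_t$ parametrized by time. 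This must then be compared to the $L^2$-disintegration $\m^t_{a_s}$ coming from the level sets of the $t$-propagated $s$-potential $\Phi^t_s$; the comparison $\m^{a_s}_t=\partial_t\Phi^t_s\cdot\m^t_{a_s}$ is \cref{prop:comparion of measure} and requires a delicate limiting argument via \cref{prop:derivative of Phi}. Only then does one get the change-of-variable formula \eqref{eq:cov}, the L-Y decomposition of $t\mapsto\rho_t(\gamma_t)^{-1}$, and finally \eqref{eq:CDKNintro} via a H\"older argument. None of this is ``standard reassembly'' and it cannot be replaced by integrating one-dimensional $\cd(K,N)$ estimates against the quotient measure of a single $L^1$-needle decomposition; this is where your proposal would break down even in the probability case.
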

Here immediately follow several useful equivalence results once we apply \cref{thm:local-to-global} to Section 13.1 and 13.2 in \cite{cavalletti2021globalization}.
\begin{corollary}
    Let $(X,d,\m)$ be a metric-measure space with a locally finite Borel measure $\mathfrak{m}$. Then\footnote{We refer to \cite[Section 13]{cavalletti2021globalization} for definitions of the following variants of curvature-dimension conditions.}
    \begin{itemize}
        \item if $(X,d,\m)$ is essentially non-branching, it holds $\cd^{*}(K,N)$ if and only if it holds $\cd(K,N)$;
        \item $(X,d,\m)$ holds $\mathrm{RCD}^{*}(K,N)$ if and only if it holds $\mathrm{RCD}(K,N)$;
        \item if $(\mathrm{supp}(\m),d)$ is a length space, it holds $\mathrm{RCD}_{\loc}(K,N)$ if and only if it holds $\mathrm{RCD}(K,N)$.
    \end{itemize}
\end{corollary}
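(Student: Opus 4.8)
The plan is to deduce the three equivalences from their finite-mass counterparts in \cite[Sections~13.1--13.2]{cavalletti2021globalization}, substituting \cref{thm:local-to-global} for the globalization theorem used there, which is the only step in those arguments that requires $\m(X)<\infty$.

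The ``only if'' implications are free of measure-theoretic constraints: the pointwise bound $\sigma^{(t)}_{K,N}(\theta)\le\tau^{(t)}_{K,N}(\theta)$ gives $\cd(K,N)\Rightarrow\cd^{*}(K,N)$, and a global synthetic condition trivially restricts to its localization, so $\cd(K,N)\Rightarrow\cd_{\loc}(K,N)$; adjoining infinitesimal Hilbertianity yields $\mathrm{RCD}(K,N)\Rightarrow\mathrm{RCD}^{*}(K,N)$ and $\mathrm{RCD}(K,N)\Rightarrow\mathrm{RCD}_{\loc}(K,N)$.

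For the converse implications I would run the arguments of \cite[Section~13]{cavalletti2021globalization} essentially unchanged, feeding in \cref{thm:local-to-global} in place of the finite-mass globalization result. Besides that input, those arguments rely on two facts, both local in nature. First, for an e.n.b.\ m.m.s.\ one has $\cd^{*}(K,N)\Rightarrow\cd_{\loc}(K,N)$: through the $L^{1}$/needle decomposition this reduces to the one-dimensional comparison between $\cd^{*}(K,N)$ and $\cd(K,N)$ densities on short segments, and the needle decomposition itself is available for a locally finite reference measure precisely because it is set up in that generality in the body of this paper. Second, by the Rajala--Sturm argument (which only inspects geodesics meeting a fixed ball, hence is insensitive to the normalization of $\m$), every infinitesimally Hilbertian m.m.s.\ satisfying $\cd_{\loc}(K,N)$ --- in particular every $\mathrm{RCD}^{*}(K,N)$ and every $\mathrm{RCD}_{\loc}(K,N)$ space --- is essentially non-branching. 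Granting these, the chain $\cd^{*}(K,N)\Rightarrow\cd_{\loc}(K,N)\Rightarrow\cd(K,N)$, with the last arrow supplied by \cref{thm:local-to-global}, gives the first bullet; intersecting with infinitesimal Hilbertianity and invoking the Rajala--Sturm step to secure the e.n.b.\ hypothesis gives the second; and the third follows at once, since $\cd_{\loc}(K,N)\Rightarrow\cd(K,N)$ by \cref{thm:local-to-global} applied to the --- now e.n.b.\ --- space $(X,d,\m)$, whose support is assumed to be a length space. In the first two bullets the length-space hypothesis of \cref{thm:local-to-global} costs nothing: a $\cd^{*}(K,N)$ space with $N<\infty$ has $(\spt(\m),d)$ geodesic, as one sees by joining normalized restrictions of $\m$ to shrinking balls around two points and extracting a limiting geodesic.

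The part that needs genuine care, as opposed to new ideas, is verifying that the localization machinery underlying $\cd^{*}(K,N)\Rightarrow\cd_{\loc}(K,N)$ and the essential non-branching of $\mathrm{RCD}$-type spaces truly operate for infinite-mass locally finite $\m$, and that passing to bounded subsets does not distort the synthetic conditions being manipulated. Once these points are matched against the constructions developed earlier in the paper, the derivations of \cite[Section~13]{cavalletti2021globalization} transcribe with no essential change.
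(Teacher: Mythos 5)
Your proposal is correct and follows exactly the approach the paper takes, which is simply to run the arguments of \cite[Sections 13.1--13.2]{cavalletti2021globalization} verbatim while replacing the finite-mass globalization step by \cref{thm:local-to-global}; the paper states this in one sentence and leaves the bookkeeping to the reader, whereas you have usefully spelled out that bookkeeping (the trivial forward implications, the local implications $\cd^{*}\Rightarrow\cd_{\loc}$ and $\mathrm{RCD}$-type $\Rightarrow$ e.n.b.\ via Rajala--Sturm, and the automatic geodesicity of $\spt(\m)$ under $\cd^{*}(K,N)$ that supplies the length-space hypothesis in \cref{thm:local-to-global}). No gap.
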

In \cite{cavalletti2021globalization}, Cavalletti and Milman introduced the $\cd^1(K,N)$ condition on finite-volume spaces, which roughly requires transport rays of signed distance functions to hold the one-dimensional $\cd(K,N)$.
Then they showed that under suitable assumptions, $\cd^1(K,N)$ implies $\cd(K,N)$.
Similarly, in this paper we tailor the definition of $\cd^1(K,N)$, adapting it to the infinite-volume situation by assuming conditional measures to be uniformly-locally finite. 
Then we split the problem into two independent ones: $\cd_{\loc}(K,N)\Rightarrow \cd^1(K,N)$ and $\cd^1(K,N)\Rightarrow \cd(K,N)$.

For the first part, we normalize the reference measure as in \cite{Cavalletti-Mondino2020} and show that the needle/ray-decomposition developed in \cite{Bianchini,cavalletti2013monge} still localizes the curvature-dimension condition to rays. 
For the second part, we show under the given definition, $\cd^1(K,N)$ space is locally finite, geodesic and satisfying $\mcp(K,N)$. 
Then we briefly present the strategy and arguments fulfilling the implication of $\cd(K,N)$ in locally finite spaces, which is basically the same as in \cite{cavalletti2021globalization} under modifications. 
Indeed, the validity is ensured basically by three aspects: (1) owing to the local finiteness of conditional measures and the properness of the space, problems are reduced to the finite-volume case by taking exhaustion by compacts subsets; (2)
$\cd(K,N)$ is reduced to a path-wise inequality along Kantorovich geodesics by the non-branchingness, hence the one-dimensional analysis in \cite[Part III]{cavalletti2021globalization} is not affected by the global infinity of $\m$; (3) temporal derivatives of potentials, investigated in \cite[Part I]{cavalletti2021globalization}, do not rely on the measure structure.

Accordingly, the rest of this paper is organized as follows.

In \cref{sec:Preparation}, we recall central definitions and preliminary results.

In \cref{section:L1}, we discuss the ray decomposition and define $\cd^1(K,N)$ in the locally finite setting.
We show under assumptions of \cref{thm:local-to-global}, $\cd_{\loc}(K,N)$ implies $\cd^1(K,N)$.

In \cref{section:cd1tocd}, we discuss the implication $\cd^1(K,N)\Rightarrow \cd(K,N)$.

\section{Preliminaries}\label{sec:Preparation}
\subsection{Curvature-Dimension Condition}\label{section:CD}
A triple $(X, d, \mathfrak{m})$ always stands for a \emph{metric measure space} consisting of a Polish metric space equipped with the Borel $\sigma$-algebra and a locally finite Borel measure $\mathfrak{m}$ (i.e. for any $x \in X$, $\mathfrak{m} (B_r(x))<\infty$ for some $r > 0$). 
Denote $\mathcal{P}_2 (X)$ as the space of probability measures with finite variances and $\mathcal{P}_2 (X, \mathfrak{m})$ the subspace of all absolutely continuous measures w.r.t. $\mathfrak{m}$.

An \emph{optimal plan} between $\mu_0, \mu_1 \in \mathcal{P}_2(X)$ is a coupling $\pi \in\mathcal{P} (X \times X)$ minimizing the cost
\begin{equation} 
C (\omega) = \int_{X \times X} \frac{d^2 (x, y)}{2} \omega (\d x \d y) 
\end{equation}
among all $\omega \in \mathcal{P} (X \times X)$ having $\mu_0$ and $\mu_1$ as the first and second marginal.
Denote by $\opt (\mu_0, \mu_1)$ the set of all optimal plans between $\mu_0$ and $\mu_1$. 
There is a $d^2/2$-concave function $\varphi : X \rightarrow \mathbb{R}$ called a \emph{Kantorovich potential} associated to optimal plan $\pi$ satisfying
\begin{equation} 
\varphi (x) + \varphi^c (y) = \frac{d^2 (x, y)}{2}, \quad \pi-a.e. (x, y) \in X \times X
\end{equation}
where $\varphi^c$ is the \emph{conjugate potential} of $\varphi$ given by
\begin{equation}
  \varphi^c (y) \coloneqq \inf_{z \in X} \left( \frac{d^2 (y, z)}{2} - \varphi(z) \right).  \label{eq:cconcave}
\end{equation}
Define the $L^2$-Wasserstein distance between probabilities as $W_2 (\mu_0,\mu_1) \coloneqq \sqrt{C (\pi)}$ for $\pi\in\opt(\mu_0,\mu_1)$, which makes $\mathcal{P}_2 (X)$ a Polish metric space. 
Denote $\geo(X)$ the set of all constant speed geodesic $\gamma\colon [0,1]\to X$.
When endowed with the supremum distance, it is a Polish metric space.

If $(X, d)$ is geodesic, so is $(\p_2 (X) , W_2)$ (see \cite[Theorem 2.10]{Ambrosio2013}). 
Let $e_t :\geo (X) \ni \gamma \mapsto \gamma_t \in X$ be the evaluation map, and $\ell(\gamma)$ be the length.
Then for any $\mu_0, \mu_1 \in \mathcal{P}_2 (X)$, there exists a probability measure $\nu$ (referred to as an \emph{optimal dynamical plan}) on $\geo(X)$ s.t.
\begin{itemize}
  \item $(e_i)_{\#} \nu = \mu_i$, $i = 0, 1$ and $(e_0, e_1)_{\#} \nu \in
  \opt (\mu_0, \mu_1)$;
  
  \item $[0, 1] \ni t \mapsto \mu_t \coloneqq (e_t)_{\#} \nu$ is a constant
  speed geodesic in $(\mathcal{P}_2 (X), W_2)$;
  
  \item $\nu$ is concentrated on the set of \emph{Kantorovich geodesics}
  \begin{equation} G_{\varphi} \coloneqq \{\gamma \in \geo (X) : \varphi (\gamma_0) +
     \varphi^c (\gamma_1) = \ell^2 (\gamma) / 2\} .
     \end{equation}
\end{itemize}
Denote by $\optgeo (\mu_0,\mu_1)$ the set of all optimal dynamical plans.

\begin{definition}
Define the \emph{$N$-R{\'e}nyi entropy} $\mathcal{E}_N$ of any $\mu \in \mathcal{P}_2 (X, \mathfrak{m})$ by
\begin{equation}
      \mathcal{E}_N (\mu) \coloneqq \int_X \rho^{1 - 1 / N} (x)
     \d\mathfrak{m}, \quad \rho \coloneqq
  \frac{\d \mu}{\d\mathfrak{m}}. 
  \end{equation}
\end{definition}

  Given $N \in (1, \infty)$, define by the following two \emph{distortion coefficients}
    \begin{align}
  \sigma_{K, N}^{(t)} (\theta)& = \frac{\sin \left( t \theta
     \sqrt{\frac{K}{N}} \right)}{\sin \left( \theta \sqrt{\frac{K}{N}}\right)} \coloneqq 
     \left\{ \begin{array}{ll}
       \frac{\sin \left( t \theta \sqrt{\frac{K}{N}} \right)}{\sin \left(
       \theta \sqrt{\frac{K}{N}} \right)} & K > 0, \quad 0<\theta<\pi \sqrt{\frac{N}{K}}\\
       t & K = 0,\quad 0< \theta<\infty\\
       \frac{\sinh \left( t \theta \sqrt{\frac{- K}{N}} \right)}{\sinh \left(
       \theta \sqrt{\frac{- K}{N}} \right)} & K < 0, \quad 0<\theta<\infty
     \end{array} \right.,\\
     \tau_{K, N}^{(t)} (\theta)&
  \coloneqq t^{1 / N} \sigma_{K, N - 1}^{(t)} (\theta)^{1 - 1 / N}.
 \end{align}
\begin{definition}Let $(X, d, \mathfrak{m})$ be a metric-measure space.
\begin{itemize}
    \item $(X, d, \mathfrak{m})$ is said to verify
  $\cd (K, N)$, if for all $\mu_0, \mu_1 \in \mathcal{P}_2 (X,
  \mathfrak{m})$, there exists $\nu \in \optgeo (\mu_0, \mu_1)$ so that
  for all $t \in [0, 1]$, $\mu_t = (e_t)_{\#} \nu \ll \mathfrak{m}$, and for
  all $N' \geq N$:
  \begin{equation}
    \mathcal{E}_{N'} (\mu_t) \geq \int_{X \times X} \tau_{K, N^{\prime}}^{(1 - t)} (d (x_0, x_1)) \rho_0^{- 1 / N'} (x_0) +\tau_{K, N'}^{(t)} (d (x_0, x_1)) \rho_1^{- 1 / N'} (x_1) \pi (\d x_{0}, \d x_1), \label{eq:definition of CD(K,N)}
  \end{equation}
  where $\pi = (e_0, e_1)_{\#} \nu$ and $\rho_t \coloneqq \frac{\d \mu_{t}}{\d\mathfrak{m}}$.
  \item $(X, d, \mathfrak{m})$ is said to verify $\cd(K,N)$ locally, or $\cd_{\loc} (K, N)$ in short, if for any $o \in \spt (\mathfrak{m})$ one can find a neighborhood $X_o \subset X$ of $o$, so that for all $\mu_0, \mu_1 \in \mathcal{P}_2 (X,\mathfrak{m})$ supported in $X_o$, there exists $\nu\in \optgeo(\mu_0, \mu_1)$ so that $\mu_t \coloneqq (e_{t })_{\#} \nu \ll \mathfrak{m}$, and $\left(\ref{eq:definition of CD(K,N)} \right)$ holds for
  all $t \in [0, 1]$, $N' \geq N$.
  \item $(X,d,\m)$ is said to verify $\mcp(K,N)$, if for any $o \in \spt (\mathfrak{m})$ and $\mu_0\coloneqq \frac{\m\llcorner A}{\m(A)}$ given $A$ a Borel subset of $X$ with $0<\m(A)<\infty$, there exists $\nu\in\optgeo(\mu_0,\delta_o)$ s.t.
\begin{equation}\label{ineq:mcp}
    \frac{\m}{\m(A)}\geq (e_t)_{\#}(\tau^{(1-t)}_{K,N}(d(\gamma_0,\gamma_1))^N\nu(\d \gamma))\quad \forall t\in[0,1].
\end{equation}
\end{itemize}
\end{definition}

\begin{definition} A set $G\subset \geo(X)$ is \emph{non-branching} if for any $\gamma^1,\gamma^2\in G$ with $\gamma^1=\gamma^2$ on $[0,t]$ for some $t\in(0,1)$, it holds $\gamma^1=\gamma^2$ on $[0,1]$.

A space $(X,d,\m)$ is called \emph{essentially non-branching} if for all $\mu_0,\mu_1\in\p_2(X,\m)$, \emph{any} $\nu\in\optgeo(\mu_0,\mu_1)$ is concentrated on a Borel non-branching set $G\subset \geo(X)$.
  
\end{definition}

\begin{remark}\label{note:setting}
  Throughout this paper, we assume that $\spt (\mathfrak{m}) = X$ without any further specification as it will not affect the generality.
  Indeed, as discussed in \cite[Remark 6.11]{cavalletti2021globalization}, whenever $\mu_0,\mu_1\ll \m$, almost every curve in the support of $\nu\in\optgeo(\mu_0,\mu_1)$ is contained in $\spt(\m)$. So the problem on $(X,d,\m)$ is equivalent to the one on $(\spt(\m),d,\m)$.
\end{remark}

\subsection{Density Functions on Nonbranching spaces}\label{sec:nonbranchingdensity}
Cavalletti-Mondino in \cite{cavalletti2017optimal} showed that optimal maps of transports with $\mu_0\ll\m$ uniquely exist on e.n.b. $\mcp(K,N)$ spaces.
Such $\mcp$-condition is always satisfied on e.n.b. $\cd_{\loc}(K,N)$ spaces (first by \cite{cavalletti2012local} on non-branching spaces, and then on e.n.b. spaces with properties developed in \cite{cavalletti2017optimal}). 

In this subsection, $(X,d,\m)$ \textbf{always stands for an e.n.b. length m.m.s. satisfying $\cd_{\loc}(K,N)$ or $\mcp(K,N)$}.
It is well-known that any $\cd_{\loc}(K,N)$ length space is locally compact (see e.g. \cite[Lemma 6.12]{cavalletti2021globalization}), so by Hopf-Rinow Theorem, it is proper and geodesic.

\begin{proposition}[cf. {\cite{cavalletti2017optimal}}]\label{prop:optimalmaps}
For every $\mu_0,\mu_1\in\p_2(X)$ with $\mu_0\ll \m$, there exists a unique $\nu \in \optgeo (\mu_0,\mu_1)$; such $\nu$ is induced by a map (i.e. $\nu=S_{\#}\mu_0$ for $S:X\supset \mathrm{Dom}(S)\rightarrow \mathrm{Geo}(X)$) and for every $t\in(0,1)$, $(e_t)_{\#}\nu\ll\mathfrak{m}$.
\end{proposition}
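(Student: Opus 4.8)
The plan is to reduce the locally finite case to the finite-volume result of Cavalletti--Mondino \cite{cavalletti2017optimal} by an exhaustion/localization argument, exploiting the properness of $(X,d)$ which was just noted. Fix $\mu_0,\mu_1\in\p_2(X)$ with $\mu_0\ll\m$, and fix any $\nu\in\optgeo(\mu_0,\mu_1)$ (existence of some optimal dynamical plan is guaranteed since $X$ is geodesic and proper). By essential non-branchingness, $\nu$ is concentrated on a Borel non-branching set $G\subset\geo(X)$. The support of $\mu_0$ together with the support of $\mu_1$ lies in a bounded set, and since all geodesics in $G_\varphi$ have endpoints in $\spt\mu_0\times\spt\mu_1$, the set of relevant Kantorovich geodesics has image contained in a fixed closed ball $\bar B_R(x_0)$, which is compact by properness. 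Hence $\m$ restricted to a neighborhood of this ball is a finite measure.

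The key step is then the following localization: restrict $\m$ to a large compact set $\bar B_{R'}(x_0)$ with $R'>R$, normalize it to a probability measure $\m'$, and observe that $\mu_0,\mu_1\in\p_2(X,\m')$ as well (since $\mu_0\ll\m$ and $\mu_0$ is supported inside the ball, $\mu_0\ll\m'$; $\mu_1$ need not be absolutely continuous but that is not required). I would then verify that the e.n.b. and $\mcp(K,N)$ properties are stable under this restriction on the interior of the ball — $\mcp(K,N)$ localizes to subsets, and essential non-branchingness is a property of optimal dynamical plans whose geodesics, by the ball being geodesically convex up to the boundary (any geodesic between interior points stays in a slightly larger ball), are unaffected. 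Applying \cite{cavalletti2017optimal} (i.e. the finite-volume version underlying \cref{prop:optimalmaps}) to $(X,d,\m')$ yields uniqueness of $\nu$, that $\nu=S_\#\mu_0$ for a map $S$, and $(e_t)_\#\nu\ll\m'$ for $t\in(0,1)$. Since $\m'$ and $\m$ are mutually absolutely continuous on the open ball $B_{R'}(x_0)\supset\spt((e_t)_\#\nu)$, the conclusion $(e_t)_\#\nu\ll\m$ transfers back, and uniqueness of $\nu$ and the map structure are insensitive to which reference measure we record (they only depend on $\mu_0,\mu_1$ and the geometry). This gives all three assertions.

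The main obstacle I anticipate is making the restriction argument clean at the level of \emph{uniqueness}: a priori $\nu$ was an arbitrary element of $\optgeo(\mu_0,\mu_1)$ for the space $(X,d,\m)$, and one must ensure that every such $\nu$ is also an optimal dynamical plan for the restricted space and vice versa — which is true because optimality of a dynamical plan depends only on $d$ and the marginals, not on $\m$, and e.n.b. for $(X,d,\m)$ gives concentration on a non-branching $G$ for \emph{every} such $\nu$. A secondary technical point is to handle the case $K>0$ where diameter bounds from $\mcp(K,N)$ (Bonnet--Myers) are available but not needed, and to make sure the cut-off radius $R'$ can be chosen uniformly so that no geodesic in the support of any competitor leaves $B_{R'}(x_0)$; properness plus boundedness of $\spt\mu_0\cup\spt\mu_1$ handles this. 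Once these compatibility checks are in place, the statement is a direct corollary of the finite-volume theory, with no new one-dimensional or measure-theoretic input required.
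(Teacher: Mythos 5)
The paper does not give a proof of this proposition: it is imported wholesale from Cavalletti--Mondino (hence the ``cf.'' tag), whose theorem on existence and uniqueness of optimal maps in essentially non-branching $\mcp(K,N)$ spaces already holds for locally finite reference measures, so no reduction to the finite-volume case is required here. (The paper's localization/conditioning strategy, which you are emulating, is actually deployed later for \cref{prop:densitychara}, not for \cref{prop:optimalmaps}.)

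Your attempted reduction also contains a genuine gap at the very first step. You assert that ``the support of $\mu_0$ together with the support of $\mu_1$ lies in a bounded set,'' but elements of $\p_2(X)$ are only required to have finite second moment, not bounded support (think of a Gaussian on $\R^n$). Consequently there is no fixed ball $\bar B_{R'}(x_0)$ containing the relevant dynamics, and the restriction $\m\llcorner\bar B_{R'}(x_0)$ does not capture the problem. To salvage this route you would have to approximate $\mu_0,\mu_1$ by boundedly supported measures and pass to the limit, but the stability under weak convergence of both uniqueness and the ``induced-by-a-map'' structure of $\nu$ requires a nontrivial argument that you have not supplied. A secondary inaccuracy: it is not true that ``all geodesics in $G_\varphi$ have endpoints in $\spt\mu_0\times\spt\mu_1$''; $G_\varphi$ is the full set of Kantorovich geodesics attached to the potential $\varphi$, which is generally much larger. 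What is true is that the optimal dynamical plan $\nu$ is concentrated on the subset of $G_\varphi$ with endpoints in $\spt\mu_0\times\spt\mu_1$.
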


\begin{lemma}[{\cite[Corollary 6.16]{cavalletti2021globalization}}]\label{lemma:injective}
Given $\mu_0,\mu_1$ as in \cref{prop:optimalmaps}, the unique optimal dynamical plan $\nu$ is concentrated on a Borel set $G\subset \geo(X)$ s.t. the evaluation map $e_{t}:G\rightarrow X$ is injective for all $t\in[0,1)$. And in particular, any Borel $H\subset G$, we have 
\[
(e_t)_{\#}(\nu\llcorner H)=({e_t}_{\#}\nu)\llcorner e_t(G)\quad \forall t\in[0,1).
\]
\end{lemma}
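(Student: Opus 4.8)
The plan is to reduce the statement to the non-branching property of the support of $\nu$. By \cref{prop:optimalmaps} we already know $\nu = S_\#\mu_0$ is induced by a map $S$, and since $(X,d,\m)$ is essentially non-branching while $\mu_0,\mu_1\ll\m$ (here $\mu_1$ can first be replaced by an absolutely continuous measure by an inner-approximation, or one works directly with the e.n.b. hypothesis applied to the pair $(\mu_0,\mu_1)$ as stated — note that the e.n.b. definition in the excerpt only requires $\mu_0,\mu_1\ll\m$, so this is already in force), $\nu$ is concentrated on a Borel non-branching set $\widetilde G\subset\geo(X)$. The first step is therefore to fix such a $\widetilde G$ and intersect it with the image of $S$, so that on the resulting set $G$ the plan is both induced by a map from time $0$ and non-branching.

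The key step is to show $e_t\colon G\to X$ is injective for $t\in[0,1)$. Suppose $\gamma^1,\gamma^2\in G$ satisfy $\gamma^1_t=\gamma^2_t$ for some $t\in(0,1)$ (the case $t=0$ is immediate since $S$ is a map: $\gamma^1_0=\gamma^2_0$ forces $\gamma^1=S(\gamma^1_0)=S(\gamma^2_0)=\gamma^2$). The idea is to exploit that the restrictions $\gamma^i|_{[0,t]}$, reparametrized to $[0,1]$, are again geodesics lying in a Kantorovich geodesic set, and they share the same endpoint at the reparametrized time $1$. Running the geodesics \emph{backwards} — i.e. considering $s\mapsto\gamma^i_{t-ts}$ — one gets two geodesics starting at the common point $\gamma^1_t=\gamma^2_t$; if one can place these into a non-branching family, then agreeing at the start for a positive time interval would force them to coincide. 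Concretely, I would invoke the standard fact (as in the cited \cite{cavalletti2017optimal,cavalletti2021globalization}) that the time-reversal and restriction of an optimal dynamical plan is again optimal, so its support can be taken non-branching; then $\gamma^1_0=\gamma^2_0$ again because both are pushed by the \emph{same} map $S$, and the non-branching property applied to the time-reversed restricted geodesics (which agree on $[0,1-t]$) yields $\gamma^1=\gamma^2$ on all of $[0,1]$. The main obstacle is bookkeeping: ensuring that after restriction/reversal one stays inside a \emph{single} Borel non-branching set valid for all relevant pairs simultaneously, rather than a set depending on the chosen geodesics; this is handled by taking $G$ to be the non-branching set for the original $\nu$ and noting restrictions of geodesics in $G$ inherit non-branching, combined with the map structure.

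For the "in particular" clause: once $e_t|_G$ is injective and Borel-measurable with Borel image (which follows from injectivity of a Borel map on a Polish space, via Lusin–Souslin), the identity
\[
(e_t)_\#(\nu\llcorner H) = ((e_t)_\#\nu)\llcorner e_t(H) = ((e_t)_\#\nu)\llcorner e_t(G)
\]
for Borel $H\subset G$ is a routine measure-theoretic computation: for any Borel $A\subset X$,
\[
(e_t)_\#(\nu\llcorner H)(A) = \nu\big(H\cap e_t^{-1}(A)\big) = \nu\big(e_t^{-1}(A\cap e_t(H))\big),
\]
and injectivity of $e_t$ on $G$ gives $e_t^{-1}(A\cap e_t(H))\cap G = H\cap e_t^{-1}(A)$ together with $e_t(H)\subset e_t(G)$, from which the stated restriction identity is immediate since $\nu$ is concentrated on $G$. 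I expect this part to be entirely formal; all the content sits in the injectivity step, which in turn is essentially a repackaging of essential non-branching plus the single-map structure from \cref{prop:optimalmaps}.
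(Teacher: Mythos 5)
There is a real gap in the injectivity step, and it is worth being precise about why the paper takes a different route.

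You want to conclude from $\gamma^1_t = \gamma^2_t$ (equality at a \emph{single} time $t\in(0,1)$) that $\gamma^1 = \gamma^2$, and you propose to do this by time-reversing/restricting and then invoking non-branching. But non-branching, as defined in the paper, only forces two geodesics to coincide once they already \emph{agree on an interval} $[0,s]$ with $s>0$; it says nothing about geodesics that merely pass through a common point. After reversing the restriction to $[0,t]$, the curves $s\mapsto\gamma^i_{t(1-s)}$ agree only at $s=0$, not on any positive interval, so your phrase \enquote{agreeing at the start for a positive time interval would force them to coincide} does not apply -- there is no such interval. What would actually force coincidence on $[0,t]$ is the \emph{map-induced} structure of the time-reversed restricted plan from $\mu_t$ to $\mu_0$ (\cref{prop:optimalmaps} applied to $\mu_t\ll\m$), not non-branching; you conflate these two. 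And even with this fix, the map structure from $\mu_t$ is only defined off a $\nu$-null set that \emph{depends on} $t$, so to get a single Borel $G$ on which $e_t$ is injective simultaneously for all $t\in[0,1)$ you would still need to intersect over a countable dense family $t_n\nearrow 1$ -- a bookkeeping step you allude to but do not actually carry out.

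The paper sidesteps both of these issues by a genuinely different mechanism. Instead of appealing to non-branching or to time-reversed optimal maps at every $t$, it constructs the \emph{concatenated} curve $\eta$ which follows $\gamma^1$ on $[0,t]$ and $\gamma^2$ on $[t,1]$ and shows, via \emph{cyclic monotonicity}, that $\eta$ is again a Kantorovich geodesic ($\eta\in G_\varphi$). It then invokes the stronger statement (from \cite{cavalletti2021globalization}, using $\mu_1\ll\m$ so that both $S_0$ and $S_1$ are available) that for $\mu_0$-a.e.\ starting point and $\mu_1$-a.e.\ endpoint there is a \emph{unique} curve in $G_\varphi$ through that point: since $\eta_0=\gamma^1_0$ this forces $\eta=\gamma^1$, hence $\gamma^1_1=\gamma^2_1$, and then uniqueness at the endpoint forces $\gamma^1=\gamma^2$. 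This uniqueness is a property of the potential $\varphi$ and holds off a single $\nu$-null set, so no dependence on $t$ arises. The countable $t_n\nearrow 1$ trick is used in the paper only for the separate reduction from general $\mu_1\in\p_2(X)$ to the absolutely continuous case, which is where your \enquote{inner approximation} remark is pointing, though that reduction restricts $\nu$ to $[0,t_n]$ rather than approximating $\mu_1$ directly. Your treatment of the \enquote{in particular} clause, by contrast, is correct and routine.
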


The following can be regarded as an expansion of the original proof in \cite{cavalletti2021globalization}.

\begin{proof}
We first assume $\mu_1\ll \m$. Recall $\nu$ is induced by a map i.e. $\nu={S_0}_{\#}\mu_0={S_1}_{\#}\mu_1$. As argued in \cite{cavalletti2021globalization}, for both $i=0,1$ we can find $X_i\subset X$ of full $\mu_i$ measure s.t. for all $x\in X_i$, there exists a unique $\gamma\in G_{\varphi}$ with $\gamma_i=x$. In particular, $\nu(S_0(X_0))=\nu(S_1(X_1))=1$. 

Take a Borel set $G\subset S_0(X_0)\cap S_1(X_1)$, still with full $\nu$-measure. 
We claim $e_t$ is injective on $G$ for all $t\in[0,1]$. By construction, $e_0$ and $e_1$ are clearly injective on $G$. 
Assume there are $\gamma,\tilde{\gamma}\in G$, $\gamma_t=\tilde{\gamma}_t$ for some $t\in(0,1)$. 
Define a curve $\eta$ by letting $\eta=\gamma$ on $[0,t]$ and $\eta=\tilde{\gamma}$ on $[t,1]$. 
By cyclic monotonicity, $\eta\in G_{\varphi}$. Since $\gamma\in S_0(X_0)$, $\eta\equiv \gamma$ on $[0,1]$ and so $\gamma_1=\tilde{\gamma}_1$. 
On the other hand, as $\gamma,\tilde{\gamma}\in S_1(X_1)$, one concludes $\gamma\equiv\tilde{\gamma}$.

For general $\mu_1\in\p_2(X)$, we prove by taking restrictions of $\nu$. 
For any $t\in[0,1)$, define 
\begin{equation}
    \mathrm{restr}^t_0:\mathrm{supp}(\nu)\rightarrow \geo(X),\quad \gamma(\cdot)\mapsto \gamma(t\cdot).
\end{equation}
\cref{prop:optimalmaps} ensures that $\mu_t\coloneqq (e_t)_{\#}\nu\ll\mathfrak{m}$, and $(\mathrm{restr}^t_0)_{\#}\nu$ is the unique optimal dynamical plan between $\mu_0$ and $\mu_t$. 
From the first step, we can find a Borel set $\tilde{G}_t$ where $(\mathrm{restr}^t_0)_{\#}\nu$ is concentrated and evaluation maps are injective over there. Then, take a sequence $t_n\nearrow 1$ and a set
\[
G\coloneqq \bigcap_{n\in\mathbb{N}}(\mathrm{restr}^{t_n}_0)^{-1}(\tilde{G}_{t_n}).
\]
One can check $\nu(G)=1$ and $e_t$ is injective on $G$ for all $t\in[0,1)$.
\end{proof}

Since $X$ is proper and any bounded subset has finite $\mathfrak{m}$-measure. Via a conditioning argument, we can extend \cite[Proposition 9.1]{cavalletti2021globalization} to infinite-volume spaces.

\begin{proposition}[Density characterization]\label{prop:densitychara}
 For any $\mu_0 \in \mathcal{P}_2 (X, \mathfrak{m})$, $\mu_1 \in
  \mathcal{P}_2 (X)$, there exists a unique $\nu \in \optgeo (\mu_0,\mu_1)$ so that for all $t \in (0, 1)$, $(e_t)_{\#} \nu \ll \mathfrak{m}$
  and
  \begin{equation}\label{eq:density of MCP} 
    \rho^{- 1 / N}_t (\gamma_t) \geq \tau^{(1 - t)}_{K, N}(d(\gamma_0,\gamma_1)) \rho^{- 1 / N}_0 (\gamma_0) 
    \quad \text{for $\nu$-a.e. } \gamma .
  \end{equation}
 It verifies $\cd (K, N)$ iff for any $\mu_0,
  \mu_1 \in \mathcal{P}_2 (X, \mathfrak{m})$, there exists a unique $\nu \in \optgeo (\mu_0, \mu_1)$ so that for all $t \in (0, 1)$, $(e_t)_{\#}\nu \ll \mathfrak{m}$ and
  \begin{equation}
    \rho^{- 1 / N}_t (\gamma_t) \geq\tau^{(1-t)}_{K,N} (d(\gamma_0,\gamma_1))\rho^{-1/N}_0(\gamma_0)+\tau^{(t)}_{K, N} (d(\gamma_0,\gamma_1)) \rho^{-1/N}_1(\gamma_1), \quad \text{for $\nu$-a.e. } \gamma . \label{eq:densityCD}
  \end{equation}
\end{proposition}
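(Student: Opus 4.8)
The plan is to reduce the characterization to the corresponding finite-volume statement in \cite[Proposition 9.1]{cavalletti2021globalization} by a conditioning/exhaustion argument, exploiting the properness of $X$ and the local finiteness of $\m$. First I would prove the unconditional inequality \eqref{eq:density of MCP}: given $\mu_0\in\p_2(X,\m)$ and $\mu_1\in\p_2(X)$, \cref{prop:optimalmaps} yields a unique $\nu\in\optgeo(\mu_0,\mu_1)$ induced by a map, with $(e_t)_\#\nu\ll\m$ for $t\in(0,1)$. Since $X$ is proper and $\mcp(K,N)$ holds, one localizes: cover the relevant (bounded, hence finite-measure) portion of $X$ and restrict $\nu$ to pieces $\nu\llcorner H$ where the geometry is controlled; by \cref{lemma:injective} the pushforwards under $e_t$ of these restrictions are just the restrictions of $\mu_t$, so the density $\rho_t$ is recovered fibrewise. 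On each such piece the $\mcp(K,N)$ inequality \eqref{ineq:mcp} (applied with $\mu_0$ normalized on a Borel set and with target a geodesic plan, then disintegrated over transport rays as in \cite{cavalletti2017optimal}) gives exactly the one-sided bound $\rho_t^{-1/N}(\gamma_t)\ge\tau_{K,N}^{(1-t)}(d(\gamma_0,\gamma_1))\rho_0^{-1/N}(\gamma_0)$ for $\nu$-a.e.\ $\gamma$; patching the pieces together is legitimate because the family is countable and the estimate is pointwise in $\gamma$.

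For the ``iff'' part, the forward direction is immediate: if $(X,d,\m)$ verifies $\cd(K,N)$ then for $\mu_0,\mu_1\in\p_2(X,\m)$ the plan furnished by the definition has $\mu_t\ll\m$ for all $t$, and uniqueness of the optimal dynamical plan (\cref{prop:optimalmaps}, applicable since $\mu_0\ll\m$) forces it to be \emph{the} plan; the integrated Rényi inequality \eqref{eq:definition of CD(K,N)} with $N'=N$ must then upgrade to the pointwise inequality \eqref{eq:densityCD}. This upgrading is the heart of the matter and is where I expect the main obstacle. The idea, following the finite-volume argument, is that the entropy inequality is actually an inequality between measures once one uses that $e_t$ is injective on a set of full $\nu$-measure (\cref{lemma:injective}): writing $\mathcal{E}_{N'}(\mu_t)=\int\rho_t^{-1/N'}\,d\mu_t=\int\rho_t^{-1/N'}(\gamma_t)\,d\nu(\gamma)$ and comparing integrands along the decomposition of $\nu$, a localization/restriction argument (restrict to the subset of geodesics with $d(\gamma_0,\gamma_1)$, $\rho_0(\gamma_0)$, $\rho_1(\gamma_1)$ in prescribed ranges, and to a bounded region where all masses are finite) forces the inequality $\nu$-a.e. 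For the converse direction of the ``iff'', one integrates \eqref{eq:densityCD} against $\nu$, uses injectivity of $e_t$ to rewrite the left-hand side as $\mathcal{E}_N(\mu_t)$, and notes that $\cd(K,N)$ for general $N'\ge N$ then follows from the $N=N'$ case by the standard monotonicity of the distortion coefficients together with an application of the same density inequality at level $N'$ (which is available because $\tau_{K,N'}\le\tau_{K,N}$ and the $\mcp$/density machinery runs identically for every parameter $\ge N$).

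The technical crux I anticipate is making the exhaustion-by-compacts and the conditioning interact cleanly with the uniqueness statement: one must ensure that restricting $\nu$ to a bounded region and renormalizing does not destroy optimality, and that the conditional measures obtained (over transport rays, or over the level sets of the relevant parameters) are genuinely the one-dimensional objects to which \cite[Part III]{cavalletti2021globalization} applies. Here I would lean on \cref{lemma:injective} precisely because it guarantees $(e_t)_\#(\nu\llcorner H)=\mu_t\llcorner e_t(H)$, which is exactly what lets a pointwise-in-$\gamma$ statement be assembled from countably many restricted problems without any measure-theoretic loss. Once this bookkeeping is in place, every individual piece is a finite-volume problem to which \cite[Proposition 9.1]{cavalletti2021globalization} applies verbatim, and the proof concludes.
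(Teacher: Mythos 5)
Your overall architecture — restrict $\nu$ to compact pieces, use \cref{lemma:injective} to identify the restricted interpolants with restrictions of $\mu_t$, conclude a pointwise-in-$\gamma$ statement from an integral one, and reverse by integrating — matches the paper's, and the ``iff'' half of your argument is essentially what the paper does (conditioning for the ``only if'', integration for the ``if''). The gap is in how you get the integral inequality \eqref{ineq:intversionofMCP} in the first place, i.e.\ the step before the localization.

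You propose to obtain \eqref{eq:density of MCP} by invoking the $\mcp(K,N)$ inequality \eqref{ineq:mcp} ``with target a geodesic plan, then disintegrated over transport rays,'' and then to claim that ``every individual piece is a finite-volume problem to which \cite[Proposition 9.1]{cavalletti2021globalization} applies verbatim.'' Neither of these is quite right. First, \eqref{ineq:mcp} is formulated only for $\mu_1=\delta_o$; passing from that to a general $\mu_1\in\p_2(X)$ is precisely the content of CM's Proposition~9.1, and their proof runs an \emph{approximation} argument whose only appeal to $\m(X)<\infty$ is the upper-semicontinuity of $\mathcal{E}_N$ under weak convergence. That single point is the real obstruction, and your sketch never engages with it; ``disintegrating over transport rays'' does not by itself supply the needed limit passage. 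Second, you cannot apply Proposition~9.1 \emph{verbatim} to a restricted problem: cutting $\m$ down to a compact set does not produce a metric-measure space that is still geodesic, essentially non-branching, and $\mcp(K,N)$, so the hypotheses of Proposition~9.1 are not inherited. The paper's fix is more surgical: keep the ambient space, observe that for boundedly supported marginals all the interpolants live in a fixed bounded set $U$, and redo Sturm's upper-semicontinuity lemma for measures confined to $U$ so that the approximation argument of Proposition~9.1 goes through unchanged. Only after the integral inequality is secured for boundedly supported data does the compact-$G$ restriction plus inner regularity give the $\nu$-a.e.\ pointwise bound. Your proposal would need to incorporate this confined upper-semicontinuity step to close the gap. (As a minor point, your justification of the $N'\ge N$ upgrade in the ``if'' direction via monotonicity of $\tau_{K,N}$ is not the right mechanism — the upgrade uses the standard fact that the pointwise $\cd(K,N)$ density inequality implies its $N'$-analogue for all $N'\ge N$, which is a convexity inequality on powers, not a comparison of distortion coefficients.)
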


\begin{proof}[Sketch of proof]
When $\mathfrak{m}(X)<\infty$, arguing by approximation as in \cite[Proposition 9.1]{cavalletti2021globalization}, for arbitrary boundedly supported $\mu_0\in\p_2(X,\m)$ and $\mu_1\in\p_2(X)$, we have 
\begin{equation}\label{ineq:intversionofMCP}
    \mathcal{E}_N (\mu_t) \geq \int \tau^{(1 - t)}_{K, N} (d (\gamma_0,
    \gamma_1)) \rho^{- 1 / N}_0 (\gamma_0) \nu (\d
    \gamma), \quad \nu \in \optgeo (\mu_0, \mu_1) ,
  \end{equation}
 where $\mu_t=(e_t)_{\#}\nu$. 
 Here the finiteness of volume is only required for showing the upper-semicontinuity of $\mathcal{E}_N$. 
 In our case due to the choice of marginals, $(\mu_t)_t$ are confined to a fixed bounded set $U$. So redoing \cite[Lemma 4.1]{sturm2006--1} ensures that $\mathcal{E}_N$ is upper-semicontinuous w.r.t. weak convergence of measures supported inside $U$. 
  
  Now consider general $\mu_0 \in \mathcal{P}_2 (X, \mathfrak{m})$, $\mu_1 \in\mathcal{P}_2 (X)$ possibly with unbounded supports. 
  Take any compact $G\subset \geo(X)$ with $\nu (G) > 0$.
  The restricted plan $\tilde{\nu} = \frac{1}{\nu(G)} \nu \llcorner_G$ is still an optimal dynamical plan. 
  By \cref{lemma:injective}, $\tilde{\mu}_t \coloneqq (e_t)_{\#} \tilde{\nu}$ has the density $\tilde{\rho}_t = \frac{1}{\nu (G)} \rho_t\llcorner e_t (G)$, and having a uniformly bounded support. 
  So \eqref{ineq:intversionofMCP} holds for $\tilde{\nu}$, implying
  \begin{equation} \int_G \rho_t^{- 1 / N} (\gamma_t) \nu (d \gamma) \geq \int_G \tau^{(1 -
     t)}_{K, N}  (d (\gamma_0, \gamma_1)) \rho_0 (\gamma_0)^{- 1 / N} \nu (d
     \gamma) . \end{equation}
  The arbitrariness of $G$ and the inner regularity of $\nu$ yield the inequality \eqref{eq:density of MCP} for $\nu$-a.e. $\gamma$.
  
  For the second assertion on $\cd(K,N)$.
  The ``only if" part follows by applying the similar conditioning to \eqref{eq:definition of CD(K,N)}.
  The ``if'' part follows directly by
  integrating \eqref{eq:densityCD} against $\nu$. 
\end{proof}

An important consequence of the previous proposition is the following continuity of optimal dynamics, which plays a crucial role in the ray decomposition (see e.g. the proof of \cref{thm:cd_locTocd_1}). Besides, the Lipschitz-regularity of densities is a starting point of the bootstrap argument in \cite[Section 12]{cavalletti2021globalization}.

\begin{corollary}[Continuity
of Dynamics, cf. {\cite[Section 9]{cavalletti2021globalization}}] \label{cor:densityconti}
Let $\nu \in \optgeo (\mu_0, \mu_1)$ for $\mu_0,\mu_1 \in \mathcal{P}_2 (X, \mathfrak{m})$.
  \begin{enumerate}
    \item There exist versions of densities $\rho_t = \frac{\d
    \mu_t}{\d\mathfrak{m}}$, $t \in [0, 1]$, so that for $\nu$-a.e. $\gamma \in
    \geo (X)$ and all $0 \leq s < t \leq 1$:
    \begin{equation}\label{eq:density along geodesic}
      \rho_s (\gamma_s) > 0, \quad \left( \tau^{\left( \frac{s}{t}
      \right)}_{K, N} (d (\gamma_0, \gamma_t)) \right)^N \leq \frac{\rho_t
      (\gamma_t)}{\rho_s (\gamma_s)} \leq \left( \tau^{\left( \frac{1 - t}{1 -
      s} \right)}_{K, N} (d (\gamma_s, \gamma_1)) \right)^{- N} .
    \end{equation}
    In particular, for $\nu$-a.e. $\gamma$, the map $t \mapsto \rho_t
    (\gamma_t)$ is locally Lipschitz on $(0, 1)$ and upper semi-continuous at $t = 0,1$.
  \item For any compact $G \subset \geo (X)$ with $\nu (G) > 0$ s.t. \eqref{eq:density along geodesic} holds for all $\gamma \in G$ and $0 \leq s \leq t \leq 1$, we have $\mathfrak{m} (e_s (G)) > 0$ for all
    $s \in [0, 1]$ and
    \begin{equation}
      \left( \frac{1 - t}{1 - s} \right)^N e^{- d (G) (t - s)  \sqrt{(N - 1)
      K^-}} \leq \frac{\mathfrak{m} (e_t (G)) \textsf{}}{\mathfrak{m} (e_s
      (G))} \leq \left( \frac{t}{s} \right)^N e^{d (G) (t - s)  \sqrt{(N - 1)
      K^-}}, \label{eq:continuous mass of evolution set}
    \end{equation}
    where $d (G) \coloneqq \max\{ \ell (\gamma) : \gamma \in G \}$ and $K^- \coloneqq \max \{ - K, 0 \}$. In particular, the map $t\mapsto\mathfrak{m} (e_t (G))$ is locally Lipschitz on $(0, 1)$ and lower semi-continuous at $t = 0,1$.
  \end{enumerate}
\end{corollary}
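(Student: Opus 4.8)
The plan is to derive both assertions from the pointwise density bound \eqref{eq:density of MCP} of \cref{prop:densitychara}, exploiting that restrictions of an optimal dynamical plan to a compact set $G$ are again optimal dynamical plans whose intermediate densities are controlled by \cref{lemma:injective}. For part (1), fix $\nu\in\optgeo(\mu_0,\mu_1)$ and, for each pair of rationals $0\le s<t\le1$, apply \cref{prop:densitychara} to the pushed-forward plan $(\mathrm{restr}_s^t)_\#\nu$ (the reparametrization of $\gamma$ to the segment $[\gamma_s,\gamma_t]$), which is the unique element of $\optgeo(\mu_s,\mu_t)$ by \cref{prop:optimalmaps}. This yields, for $\nu$-a.e.\ $\gamma$, both $\rho_t(\gamma_t)^{-1/N}\ge \tau^{(s/t)}_{K,N}(d(\gamma_0,\gamma_t))\,\rho_s(\gamma_s)^{-1/N}$ and the time-reversed inequality $\rho_s(\gamma_s)^{-1/N}\ge \tau^{((1-t)/(1-s))}_{K,N}(d(\gamma_s,\gamma_1))\,\rho_t(\gamma_t)^{-1/N}$. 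Taking the intersection over all rational pairs gives a $\nu$-full set on which both bounds hold simultaneously; a density/monotonicity argument then upgrades them to all real $0\le s<t\le1$. Raising to the $-N$ power gives \eqref{eq:density along geodesic}. Since $\tau^{(\cdot)}_{K,N}$ is continuous and strictly positive on the relevant range and $d(\gamma_s,\gamma_t)\to0$ as $|t-s|\to0$, the two-sided bound forces $t\mapsto\rho_t(\gamma_t)$ to be locally Lipschitz on $(0,1)$ (comparing $\rho_t/\rho_s$ with $1$ to first order in $t-s$), and upper semicontinuity at the endpoints follows by letting $s\downarrow0$ or $t\uparrow1$ in the one-sided bounds.

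For part (2), let $G\subset\geo(X)$ be compact with $\nu(G)>0$ on which \eqref{eq:density along geodesic} holds for all $\gamma\in G$. By \cref{lemma:injective}, $e_s$ is injective on $G$ for $s\in[0,1)$ and $(e_s)_\#(\nu\llcorner G)=\rho_s\,\m\llcorner e_s(G)$ after choosing the good versions of the densities; in particular $\m(e_s(G))=\int_{e_s(G)}\rho_s\,\d\m\cdot\rho_s^{-1}$ is computed by the change-of-variables $\int_G \d(\nu\llcorner G)=\int_{e_s(G)}\rho_s\,\d\m$, i.e.\ $\nu(G)=\int_{e_s(G)}\rho_s\,\d\m$. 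Writing $\m(e_t(G))=\int_{e_t(G)}\d\m=\int_G \rho_t(\gamma_t)^{-1}\,\nu(\d\gamma)$ (and likewise for $s$), the ratio $\m(e_t(G))/\m(e_s(G))$ is squeezed between the infimum and supremum over $\gamma\in G$ of $\rho_t(\gamma_t)^{-1}/\rho_s(\gamma_s)^{-1}=\rho_s(\gamma_s)/\rho_t(\gamma_t)$. Applying \eqref{eq:density along geodesic} with the substitution of the $\tau$-coefficients and estimating $\sigma^{(r)}_{K,N-1}(\theta)$ from above and below in terms of $(\frac{1-t}{1-s})$, $(\frac ts)$ and the exponential factor $e^{\pm d(G)(t-s)\sqrt{(N-1)K^-}}$ — using $\theta=d(\gamma_0,\gamma_t)\le d(G)$ or $\theta=d(\gamma_s,\gamma_1)\le d(G)$ and elementary bounds $\sinh x\le x\,e^{x}$, $\sinh(ax)/\sinh x\le a\,e^{(a\vee1)x}$ type inequalities — produces exactly \eqref{eq:continuous mass of evolution set}. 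The finiteness $\m(e_t(G))<\infty$ and positivity $\m(e_s(G))>0$ come from properness (compactness of $e_s(G)$) and from $\nu(G)>0$ together with $\rho_t(\gamma_t)<\infty$. Local Lipschitzness on $(0,1)$ and lower semicontinuity at the endpoints of $s\mapsto\m(e_s(G))$ then follow from \eqref{eq:continuous mass of evolution set} exactly as in part (1).

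The main obstacle I anticipate is the bookkeeping needed to pass from the rational-time, $\nu$-a.e.\ inequalities to a statement holding for \emph{all} $\gamma\in G$ and \emph{all} real $s,t$ with a single choice of good density representatives — this is where \cref{lemma:injective} is essential, since it lets us identify $(e_t)_\#(\nu\llcorner H)$ with $\rho_t\,\m\llcorner e_t(H)$ for arbitrary Borel $H\subset G$, and hence propagate the pointwise bound along a countable dense set of times and then take limits using monotonicity in $s$ and $t$ of the one-sided estimates. A secondary technical point is verifying the elementary hyperbolic/trigonometric inequalities that convert the sharp $\tau$-coefficient ratios into the cruder exponential form of \eqref{eq:continuous mass of evolution set}; these are routine but must be done with care about the sign of $K$ and, when $K>0$, about staying below the diameter threshold $\pi\sqrt{N/K}$ (which is automatic here since $d(G)$ is finite and, on a $\cd_{\loc}(K,N)$ or $\mcp(K,N)$ space, bounded by the generalized Bonnet–Myers diameter).
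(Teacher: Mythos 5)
Your overall strategy is the same one the paper relies on (via its citation of Cavalletti--Milman, Section 9): derive a two-sided pointwise bound on $\rho_t(\gamma_t)/\rho_s(\gamma_s)$ by applying \cref{prop:densitychara} to restricted optimal dynamical plans, then in part (2) use \cref{lemma:injective} to write $\mathfrak{m}(e_t(G))=\int_G \rho_t(\gamma_t)^{-1}\nu(\d\gamma)$ and squeeze the ratio, closing with elementary bounds on $\tau^{(r)}_{K,N}$. That skeleton is correct.

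However, the two displayed inequalities in your part (1) are written in the wrong direction, and as stated they do not follow from \cref{prop:densitychara}. You propose to restrict $\nu$ to the segment $[\gamma_s,\gamma_t]$; but the MCP-type bound \eqref{eq:density of MCP} only controls densities at \emph{intermediate} times, and $\gamma_t$ is an endpoint of that restriction, so no relation between $\rho_t(\gamma_t)$ and $\rho_s(\gamma_s)$ comes out of it directly. The correct restrictions are to $[0,t]$ and $[s,1]$. Using $[0,t]$ \emph{backwards} from $\mu_t$ (admissible because $\mu_t\ll\mathfrak{m}$) at intermediate parameter $1-s/t$ yields
$\rho_s^{-1/N}(\gamma_s)\ge \tau^{(s/t)}_{K,N}(d(\gamma_0,\gamma_t))\,\rho_t^{-1/N}(\gamma_t)$,
and using $[s,1]$ forwards from $\mu_s$ at parameter $(t-s)/(1-s)$ yields
$\rho_t^{-1/N}(\gamma_t)\ge \tau^{((1-t)/(1-s))}_{K,N}(d(\gamma_s,\gamma_1))\,\rho_s^{-1/N}(\gamma_s)$.
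Raising \emph{these} to the power $-N$ reproduces \eqref{eq:density along geodesic}; your versions have the roles of $\rho_s,\rho_t$ swapped, so raising them to $-N$ pairs each $\tau$-coefficient with the wrong side of the two-sided bound and does not give the stated inequality. Once the inequalities are corrected, the rest of your argument (the countable-times-then-limit bookkeeping, the squeeze in part (2), and the hyperbolic estimates giving the exponential factors) goes through as you describe.
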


\subsection{Intermediate-time Kantorovich Potentials}\label{section:potentials}

We first recall the notion of intermediate-time Kantorovich potentials.

\begin{definition}
  \label{def:int time Kp}Given a Kantorovich potential $\varphi : X
  \rightarrow \mathbb{R}$, the \emph{intermediate-time Kantorovich potential}
  $\varphi_t$ at time $t \in [0, 1]$ is defined by $\varphi_0 = \varphi$, $\varphi_1 = -\varphi^c$ and 
  \begin{equation} \varphi_t (x) \coloneqq - \underset{y \in X}{\inf} \left[ \frac{d^2 (x,
     y)}{2 t} - \varphi (y) \right]. \end{equation}
\end{definition}

Denote the domain of the dynamics and its section through $x$ as:
\begin{align} 
D ({G}_{\varphi}) &\coloneqq \{(x, t) \in X \times (0, 1) : \exists\gamma \in G_{\varphi}, x = \gamma_t \}, \\
G_{\varphi}(x)&\coloneqq \{t\in (0,1):(x,t)\in D(G_{\varphi})\}.
\end{align}

Based on \cref{lemma:injective}, when $(X,d,\m)$ is e.n.b., for simplicity we will assume $e_{t}:G_{\varphi}\rightarrow \R$ is injective for all $t\in[0,1]$ as otherwise it suffices to restrict $\nu$ to some Borel $G\subset G_{\varphi}$.
Then the length function is defined by
\[
\ell:D(G_{\varphi})\ni(x,t)\mapsto \ell(e_t^{-1}(x))\coloneqq \mathrm{Length}(e_t^{-1}(x))
\]
and we also use the notation $\ell_t(\cdot)\coloneqq \ell(\cdot, t)$ on $e_t (G_{\varphi})$ for every $t$.

\begin{definition}
  Given a Kantorovich potential $\varphi : X \rightarrow \mathbb{R}$
  and $s, t \in (0, 1)$, define the \emph{$t$-propagated $s$-Kantorovich potential}
  $\Phi_s^t$ on $e_t (G_{\varphi})$ by
  \begin{equation} 
  \Phi_s^t\coloneqq\varphi_s \circ e_s \circ e_t^{- 1}=\varphi_t+(t-s)\frac{\ell^2_t}{2}. 
  \end{equation}
\end{definition}

For every fixed $s\in(0,1)$, according to the value of $\varphi_s$, $G_{\varphi}$ can be partitioned into closed levels
\begin{equation}
   G_{\varphi} = \sqcup_{a_s \in \im (\varphi_s \circ e_s)}G_{\varphi, a_s}, \quad  G_{\varphi, a_s} \coloneqq (\varphi_s \circ e_s)^{-1} (a_s) = \{\gamma \in G_{\varphi} : \varphi_s (\gamma_s) = a_s \} 
\end{equation}
which further leads to a partition of $e_t(G_{\varphi})$ (any $t\in(0,1)$) via $\Phi_s^t$ by
\begin{equation} 
e_t (G_{\varphi}) = \sqcup_{a_s \in \im (\varphi_s \circ e_s)} e_t(G_{\varphi, a_s}), \quad e_t (G_{\varphi, a_s})\coloneqq (\Phi_s^t)^{- 1} (a_s) =\{\gamma_t : \varphi_s (\gamma_s) = a_s \}. 
\end{equation}

\begin{lemma}[Continuity of potentials, cf. {\cite[Section 3]{Ambrosio_2013}} and {\cite[Theorem 3.11, Proposition 4.4]{cavalletti2021globalization}}]\label{lemma:continity}
The function $X \times (0, 1) \ni (x, t) \mapsto \varphi_t (x)$ is locally Lipschitz. 
The length function $\ell$ is continuous on $D(G_{\varphi})$.
For any $x \in X$ and $s\in (0, 1)$, functions ${G}_{\varphi}(x) \ni t \mapsto \ell_t (x)$ and  ${G}_{\varphi}(x) \ni t \mapsto \Phi^t_s (x)$ are locally Lipschitz.
\end{lemma}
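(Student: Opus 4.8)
\textbf{Proof proposal for \cref{lemma:continity}.}

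The plan is to build the result in three layers, each feeding the next. First, the local Lipschitz continuity of $(x,t)\mapsto\varphi_t(x)$ on $X\times(0,1)$: I would argue exactly as in the unit-volume case, since this statement is purely metric and does not see $\m$. Fix a compact neighborhood $K\times[t_0,t_1]\subset X\times(0,1)$; properness of $X$ (already available since $\cd_{\loc}(K,N)$ length spaces are proper) guarantees that, when computing the infimum defining $\varphi_t(x)$, the competitor $y$ may be restricted to a fixed bounded set. Then $\varphi_t(x)=-\inf_y\bigl[\tfrac{d^2(x,y)}{2t}-\varphi(y)\bigr]$ is an infimum of a family of functions that are jointly Lipschitz in $(x,t)$ on $K\times[t_0,t_1]$ with a uniform constant (the map $t\mapsto 1/(2t)$ is Lipschitz away from $0$, and $d^2(x,y)$ is locally Lipschitz in $x$ uniformly for $y$ in a bounded set), so the infimum inherits the local Lipschitz bound. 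This is the content cited from \cite{Ambrosio_2013} and \cite[Theorem 3.11]{cavalletti2021globalization}.

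Second, continuity of the length function $\ell$ on $D(G_\varphi)$. The identity $\Phi_s^t=\varphi_t+(t-s)\tfrac{\ell_t^2}{2}$, valid for $s,t\in(0,1)$, gives, after fixing two times $s\neq t$ and subtracting,
\begin{equation}
\frac{\ell_t^2}{2}(t-s)=\Phi_s^t-\varphi_t=\varphi_s\circ e_s\circ e_t^{-1}-\varphi_t,
\end{equation}
so $\ell_t$ on $e_t(G_\varphi)$ is expressed through the intermediate potentials composed with the (a priori only measurable) transport maps $e_s\circ e_t^{-1}$. The cleanest route is instead the classical one: on $D(G_\varphi)$ the length is recovered from the derivative of $t\mapsto\varphi_t(\gamma_t)$ along geodesics, or directly from the metric formula $\ell(\gamma)=d(\gamma_0,\gamma_1)$ together with the fact that $G_\varphi$, being defined by the closed condition $\varphi(\gamma_0)+\varphi^c(\gamma_1)=\ell^2(\gamma)/2$ with $\varphi,\varphi^c$ continuous, behaves well under limits. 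Concretely, if $(x_n,t_n)\to(x,t)$ in $D(G_\varphi)$ with $x_n=\gamma^n_{t_n}$, properness and the uniform bound on $\ell(\gamma^n)$ (coming from local Lipschitzianity of $\varphi_t$ and the displayed identity) let me extract a subsequence with $\gamma^n\to\gamma$ uniformly; the closedness of $G_\varphi$ gives $\gamma\in G_\varphi$ with $\gamma_t=x$, and injectivity of $e_t$ on $G_\varphi$ forces $\gamma=e_t^{-1}(x)$, so $\ell(\gamma^n)=d(\gamma^n_0,\gamma^n_1)\to d(\gamma_0,\gamma_1)=\ell_t(x)$. Since every subsequential limit is the same, $\ell$ is continuous; this is \cite[Proposition 4.4]{cavalletti2021globalization} transcribed.

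Third, given continuity of $\ell$ and local Lipschitzianity of $\varphi_t$, the maps $t\mapsto\ell_t(x)$ and $t\mapsto\Phi_s^t(x)$ on $G_\varphi(x)$ are handled from the identity $\Phi_s^t=\varphi_t+(t-s)\tfrac{\ell_t^2}{2}$: one first shows $t\mapsto\ell_t(x)$ is locally Lipschitz (using that along a fixed ray $e_t^{-1}(x)=\gamma$ is a single geodesic, so $\ell_t(x)\equiv\ell(\gamma)=d(\gamma_0,\gamma_1)$ is in fact locally \emph{constant} on each connected component of $G_\varphi(x)$ where $x$ stays on one ray — the subtlety being that $G_\varphi(x)$ may be a nontrivial interval only when $x$ lies on a single ray, in which case $\ell_t(x)$ is constant there), and then $\Phi_s^t(x)$ is locally Lipschitz in $t$ as a sum of the locally Lipschitz $\varphi_t(x)$ and the locally Lipschitz $(t-s)\ell_t^2(x)/2$. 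I expect the main obstacle to be the bookkeeping in this last step: making precise the structure of the section $G_\varphi(x)$ — showing that on it $x$ lies on a single geodesic so that $\ell_t(x)$ is genuinely constant (or at worst locally Lipschitz) — and ensuring the extraction-of-limits argument in step two is uniform on compact subsets of $D(G_\varphi)$ rather than merely sequential. Both points are routine given properness and the non-branching structure encoded in \cref{lemma:injective}, but they are where the argument must be written carefully rather than quoted.
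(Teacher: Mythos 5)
Your steps one and two are sound and track the cited arguments: the first is the standard Hopf--Lax Lipschitz estimate for $\varphi_t$, and the second is the extraction-of-a-convergent-subsequence argument for continuity of $\ell$ on $D(G_\varphi)$, with injectivity of $e_t$ (from \cref{lemma:injective}) pinning down the limit. The paper itself only cites this lemma, so there is no paper proof to compare against; nonetheless the third step of your proposal contains a genuine error rather than a routine gap.

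You claim that along $G_\varphi(x)$ the point $x$ ``lies on a single ray'' so that $\ell_t(x)$ is (locally) constant in $t$. This is false. For each fixed $t$ the injectivity convention gives a \emph{unique} $\gamma^t := e_t^{-1}(x) \in G_\varphi$ with $\gamma^t_t = x$, but as $t$ varies over $G_\varphi(x)$ the geodesic $\gamma^t$ itself varies, and with it the length $\ell_t(x) = \ell(\gamma^t)$. Non-branching (\cref{lemma:injective}) forbids two distinct Kantorovich geodesics sharing a point \emph{at the same time}, not at different times. A concrete one-dimensional example: on $(\mathbb{R},|\cdot|,\mathcal{L}^1)$ take $\mu_0 = \mathds{1}_{[0,1]}\mathcal{L}^1$, $\mu_1 = \tfrac12\mathds{1}_{[2,4]}\mathcal{L}^1$; the optimal map is $T(a) = 2a+2$ and the Kantorovich geodesics are $\gamma^a(t) = a(1+t) + 2t$ with $\ell(\gamma^a) = a+2$. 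Given $x$, the geodesic through $x$ at time $t$ has initial point $a = \tfrac{x-2t}{1+t}$, whence
\begin{equation}
\ell_t(x) = \frac{x+2}{1+t},
\end{equation}
which is strictly decreasing in $t$ on $G_\varphi(x)$ --- locally Lipschitz, but nowhere constant. The nontrivial content of the cited {\cite[Proposition 4.4]{cavalletti2021globalization}} is precisely to control this $t$-dependence; the argument there exploits the interpolation identity $\varphi_t(\gamma_t) = \varphi_s(\gamma_s) - (t-s)\ell^2(\gamma)/2$ along \emph{each} $\gamma^t$ together with the Lipschitz bound on $\varphi_s$ and the metric estimate $d(\gamma^t_s, x) = |t-s|\,\ell(\gamma^t)$ to produce a two-sided difference quotient bound for $\ell^2_t(x)$. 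Since your step three reduces the locally Lipschitz claim to a constancy statement that is false, and the locally Lipschitz bound for $\Phi^t_s(x)$ is built on top of it, the argument does not close as written.
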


\section{$L^1$-disintegration}\label{section:L1}
\subsection{Disintegration Theorem}\label{section:Disintegration}

Proofs of assertions in this subsection can be found in {\cite[Appendix
A]{Bianchini-Caravenna}}.
Let $(X, \mathfrak{X}, \mathfrak{m})$, $(Q, \mathcal{Q}, \mathfrak{q})$ be
measure spaces. A \emph{disintegration} of $\mathfrak{m}$ over $\mathfrak{q}$ is a family of measures $(\mathfrak{m}_q)_{q \in Q}$ on $X$ s.t. for every $E \in
\mathfrak{X}$, the map $q \mapsto \mathfrak{m}_q (E)$ is
$\mathfrak{q}$-measurable and $\mathfrak{m} (E) = \int_Q \mathfrak{m}_q (E)
\mathfrak{q} (\d q)$. By {\cite[Proposition 452F]{Fremlin}}, for any
$\mathfrak{m}$-measurable $\xi : X \rightarrow \mathbb{R}$, we have
\begin{equation}
  \int_Q \int_X \xi (x) \mathfrak{m}_q  (\d x) \mathfrak{q} (\d q) = \int_X \xi
  (x) \mathfrak{m} (\d x), \label{eq:definedisintbyf}
\end{equation}
provided $\int \xi (x) \mathfrak{m} (\d x)$ is well-defined in $\mathbb{R} \cup
\{\pm \infty\}$.

Given measurable $f : (X, \mathfrak{X}) \rightarrow (Q, \mathcal{Q})$, a disintegration $(\mathfrak{m}_q)_{q \in Q}$ of $\mathfrak{m}$ over $\mathfrak{q}$ is called \emph{consistent} with $f$ if for each $I\in\mathcal{Q}$,
  \begin{equation}
    \mathfrak{m} (E \cap f^{- 1} (I)) = \int_I \mathfrak{m}_q (E) \mathfrak{q}
    (\d q) . \label{eq:defineconsisdisint}
  \end{equation}
And $(\mathfrak{m}_q)_{q \in Q}$ is called \emph{strongly consistent} with $f$, if for $\mathfrak{q}$-a.e. $q \in Q$, $\mathfrak{m}_q$ is concentrated on $f^{- 1}(\{q\})$. 
Clearly, strong consistency implies consistency.
\begin{remark}[Uniqueness of Disintegration]
  \label{rmk:uniqueofdisinte}If $\mathfrak{X}$ is countably generated with
  a $\sigma$-finite measure $\mathfrak{m}$, and disintegrations $(\mathfrak{m}_q)$,
  $(\tilde{\mathfrak{m}}_q)$ of $\mathfrak{m}$ over
  $\mathfrak{q}$ are consistent with $f$, then $\mathfrak{m}_q =
  \tilde{\mathfrak{m}}_q$ for $\mathfrak{q}$-a.e. $q \in Q$, or in short,
  consistent disintegrations are {\emph{$\mathfrak{q}$-unique}}.
  
  Indeed, by {\cite[Proposition 3.3]{preston2008notes}}, there is a countable
  subalgebra $\{B_n \in \mathfrak{X}, n \in \mathbb{N}\}$ generating
  $\mathfrak{X}$. 
  After putting $E = B_n$ into \eqref{eq:defineconsisdisint}, we know up to a $\mathfrak{q}$-negligible set $N \subset Q$, $\mathfrak{m}_q
  (B_n) = \tilde{\mathfrak{m}}_q (B_n)$ for all $n$ and $q$. 
  So when $\mathfrak{m}$ is finite, with Dynkin's theorem, $\mathfrak{m}_q =\tilde{\mathfrak{m}}_q$ for all $q \in Q \setminus N$. 
  For the case where $\m $ is $\sigma$-finite, we can repeat the previous argument on any subset $E$ of finite $\mathfrak{m}$-measure to show $\mathfrak{m}_q\llcorner E=\tilde{\mathfrak{m}}_q\llcorner E$ for a.e. $q$.
  The argument is complete after taking an exhausting sequence $E_n$ of $X$.
  
  In particular, strongly consistent disintegrations of a locally finite measure are $\mathfrak{q}$-unique.
\end{remark}

If $X$ has a partition $\Pi = \{X_q \}_{q \in Q}$, define $\mathfrak{Q}: X
\rightarrow Q$ by mapping each point in $X_q$ to $q$. 
Endowed with the quotient $\sigma$-algebra $\mathcal{Q}$ and the quotient measure $\mathfrak{q}=\mathfrak{Q}_{\#} \mathfrak{m}$, $(Q, \mathcal{Q},\mathfrak{q})$ is a measure space.

\begin{definition}
  A \emph{cross section} of a partition $\Pi$ is a subset $S$ of $X$ so that $S \cap A$ is a singleton
  for each $A \in \Pi$. 
  A \emph{section} is a map $\mathfrak{S}: X \rightarrow X$ such that for each $x \in X$, the image of $[x]$ under $\mathfrak{S}$ is a singleton in $[x]$, where $[x]$ is the equivalence class of $x$ under $\Pi$.
  
  A subset $S_{\mathfrak{m}}$ is called an \emph{$\mathfrak{m}$-section} if there exists a \emph{Borel} set $\Gamma \subset X$ s.t. $\m(X\setminus \Gamma)=0$ and the partition $\Pi_{\Gamma} = \{X_q \cap \Gamma\}_{q \in Q}$ has $S_{\mathfrak{m}}$ as a cross
  section.
\end{definition} 

\begin{theorem}[Disintegration Theorem]
  \label{thm:disintegration}Assume $(X, \mathfrak{X}, \mathfrak{p})$ is a
  countably generated probability space, having a partition $\Pi = \{X_q \}_{q
  \in Q}$. Let $\mathfrak{Q}: X \rightarrow Q$ and $(Q, \mathcal{Q}, \mathfrak{q})$ be the quotient map and quotient space resp. There exists a unique disintegration
  $q \mapsto \mathfrak{p}_q \in \mathcal{P} (X)$ of $\mathfrak{p}$ over
  $\mathfrak{q}$ consistent with $\mathfrak{Q}$.
 Moreover, this disintegration is strongly consistent with $\mathfrak{Q}$ iff there exists a Borel $\mathfrak{p}$-section $S_{\mathfrak{p}}\subset Q$ s.t. the quotient $\sigma$-algebra $\mathcal{Q} \cap S_{\mathfrak{p}}$ contains $\mathcal{B} (S_{\mathfrak{p}})$.
\end{theorem}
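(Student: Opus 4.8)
The plan is to take uniqueness for free, to obtain existence by transporting $\mathfrak{p}$ to a standard Borel model where regular conditional probabilities are available, and to settle the strong-consistency criterion by exploiting the contrast between arbitrary partitions and \emph{measurable} ones. Since $\mathfrak{X}$ is countably generated and $\mathfrak{p}$ is a probability (hence $\sigma$-finite), \cref{rmk:uniqueofdisinte} already shows that any two disintegrations of $\mathfrak{p}$ over $\mathfrak{q}$ consistent with $\mathfrak{Q}$ agree $\mathfrak{q}$-a.e.; so it suffices to exhibit one disintegration consistent with $\mathfrak{Q}$ and to characterize when it is strongly consistent.

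For existence I would fix a countable algebra $\{A_n\}$ generating $\mathfrak{X}$ with $A_0=X$ and let $\kappa\colon X\to Z\coloneqq\{0,1\}^{\mathbb{N}}$, $\kappa(x)\coloneqq(\mathbf{1}_{A_n}(x))_n$. After collapsing points not separated by $\mathfrak{X}$ (harmless, as the disintegration depends only on $\mathfrak{X},\mathfrak{p},\mathcal{Q},\mathfrak{q}$), I may identify $X$ with $\kappa(X)\subseteq Z$, with $\mathfrak{X}$ the trace of $\mathcal{B}(Z)$ on $X$ and $\{A_n\}$ the trace of the clopen algebra; put $\hat{\mathfrak{p}}(C)\coloneqq\mathfrak{p}(C\cap X)$, a Borel probability on the compact metric space $Z$ for which $X$ has full outer measure, and let $\mathcal{G}^{Z}$ be the $\sigma$-algebra of Borel $C$ with $C\cap X\in\mathfrak{Q}^{-1}(\mathcal{Q})$, so that $\kappa^{-1}(\mathcal{G}^{Z})=\mathfrak{Q}^{-1}(\mathcal{Q})$. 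Because $Z$ is standard Borel, the regular conditional probability $z\mapsto\hat{\mathfrak{p}}(\,\cdot\mid\mathcal{G}^{Z})(z)\in\mathcal{P}(Z)$ exists; by the Doob--Dynkin lemma its restriction to $X$ factors through $\mathfrak{Q}$, which for $\mathfrak{q}$-a.e.\ $q$ yields a Borel probability on $Z$; one checks that this probability is carried by $X$, so that it descends to $\mathfrak{p}_q\in\mathcal{P}(X)$, and the defining relation \eqref{eq:defineconsisdisint} then follows from the conditional-expectation identity, first on $\{A_n\}$ and then on all of $\mathfrak{X}$ by a monotone-class argument. (Equivalently, one can build $\mathfrak{p}_q$ by hand: taking $\mathcal{Q}$-measurable versions $g_n$ of $\mathbb{E}[\mathbf{1}_{A_n}\mid\mathfrak{Q}^{-1}(\mathcal{Q})]$, countably many a.e.\ identities make $A_n\mapsto g_n(q)$ finitely additive of total mass $1$ for $\mathfrak{q}$-a.e.\ $q$, and on the clopen algebra of the \emph{compact} space $Z$ finite additivity is already a premeasure, since a decreasing sequence of nonempty clopen sets cannot have empty intersection.)

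For the ``moreover'' I would establish both implications with the help of this model. Note first that $\mathfrak{Q}^{-1}(I)\in\mathfrak{X}$ for every $I\in\mathcal{Q}$, so $\mathcal{Q}\cap S_{\mathfrak{p}}\subseteq\mathcal{B}(S_{\mathfrak{p}})$ always; thus the hypothesis ``$\mathcal{Q}\cap S_{\mathfrak{p}}\supseteq\mathcal{B}(S_{\mathfrak{p}})$'' really says that the section carries its full $\mathfrak{X}$-trace structure. For $(\Leftarrow)$, with such a Borel $\mathfrak{p}$-section $S_{\mathfrak{p}}$ and witnessing full-measure Borel set $\Gamma$, the retraction $r\colon\Gamma\to S_{\mathfrak{p}}$ onto the unique representative is $\mathfrak{X}$-measurable exactly because every $\mathfrak{X}$-trace set on $S_{\mathfrak{p}}$ is $\mathcal{Q}$-measurable, and it identifies $(Q,\mathcal{Q},\mathfrak{q})$ with $(S_{\mathfrak{p}},\mathfrak{X}\cap S_{\mathfrak{p}},r_{\#}\mathfrak{p})$; since $\mathfrak{X}$ is countably generated so is $\mathcal{Q}$, whence $\mathcal{G}\coloneqq\mathfrak{Q}^{-1}(\mathcal{Q})$ is a countably generated sub-$\sigma$-algebra of $\mathfrak{X}$ whose atoms are precisely the classes $X_q$. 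Running the existence construction with generators $G_k$ of $\mathcal{G}$ and using that $\mathbb{E}[\mathbf{1}_{G_k}\mid\mathcal{G}]$ is $\{0,1\}$-valued, one finds that for $\mathfrak{q}$-a.e.\ $q$ the measure $\mathfrak{p}_q$ is the $0$--$1$ measure on $\mathcal{G}$ that evaluates at $X_q$, hence $\mathfrak{p}_q(X_q)=1$; by $\mathfrak{q}$-uniqueness this is \emph{the} disintegration and it is strongly consistent. For $(\Rightarrow)$, assume strong consistency; then the saturated measurable functions $f_n\coloneqq(q\mapsto\mathfrak{p}_q(A_n))\circ\mathfrak{Q}$ generate a countably generated saturated sub-$\sigma$-algebra that separates $\mathfrak{q}$-a.e.\ pair of classes (if $\mathfrak{p}_q$ and $\mathfrak{p}_{q'}$ agreed on all $A_n$ they would be the same probability measure, impossible for disjoint classes each of full measure). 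Pushed to the model this realizes $\Pi$, modulo a $\mathfrak{p}$-null set, as a \emph{measurable partition} of a standard probability space; by Rokhlin's structure theorem it is isomorphic mod $0$ to the partition of a product $A\times B$ into the slices $\{a\}\times B$, which possesses the measurable cross section $A\times\{b_0\}$. Transporting this cross section back through $\kappa$ produces the required Borel $\mathfrak{p}$-section, and the product description gives $\mathcal{Q}\cap S_{\mathfrak{p}}=\mathcal{B}(S_{\mathfrak{p}})$.

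The step I expect to be the real obstacle is the $(\Rightarrow)$ implication, where one must actually \emph{produce} a Borel cross section out of abstract strong consistency — this is the content of a measurable-selection theorem, equivalently of Rokhlin's structure theorem for measurable partitions of Lebesgue spaces, and is the only place where genuine descriptive set theory enters. A secondary but delicate point is the verification, in the existence step, that the objects constructed over $Z$ are really carried by $X$ and hence define elements of $\mathcal{P}(X)$ rather than of $\mathcal{P}(Z)$: this rests on $X$ having full outer $\hat{\mathfrak{p}}$-measure together with standard facts on perfect/standard measures, and for all of this bookkeeping I would follow \cite[Appendix A]{Bianchini-Caravenna} and \cite{Fremlin}.
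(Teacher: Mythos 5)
Note first that the paper does not prove this statement itself: Section 3.1 opens by referring the reader to \cite[Appendix A]{Bianchini-Caravenna} for all assertions in that subsection, and \cref{thm:disintegration} is taken as a black box. Your overall plan --- transport $\mathfrak{p}$ to the Cantor cube $Z=\{0,1\}^{\mathbb{N}}$ via the generating countable algebra, build regular conditional probabilities in the standard Borel model, descend them back to $X$, and settle the ``iff'' by a selection argument --- is the same reduction carried out in the cited reference, and your existence/uniqueness half is sound: uniqueness is indeed \cref{rmk:uniqueofdisinte}, and the Carath\'eodory route on the clopen algebra of the compact space $Z$ is the clean way to get countable additivity of each $\mathfrak{p}_q$ without chasing exceptional sets.

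The genuine gap is in your $(\Rightarrow)$ implication. Rokhlin's structure theorem for measurable partitions of Lebesgue spaces produces a \emph{mod-$0$ measure-preserving} isomorphism to a product, hence a cross section measurable only with respect to the $\mathfrak{p}$-\emph{completion} of $\mathfrak{X}$, whereas the statement demands a \emph{Borel} $\mathfrak{p}$-section $S_{\mathfrak{p}}$ with $\mathcal{Q}\cap S_{\mathfrak{p}}\supseteq\mathcal{B}(S_{\mathfrak{p}})$. These are not the same thing, contrary to your parenthetical ``equivalently of Rokhlin's structure theorem'': upgrading a completion-measurable selector to a Borel one is precisely where a genuine measurable-selection theorem (Jankov--von Neumann for analytic relations, or Kuratowski--Ryll-Nardzewski for Borel closed-valued multifunctions) has to be applied to the Borel partition in the model $Z$, and this step is not a corollary of Rokhlin. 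A secondary issue of the same kind: once a Borel cross section is produced inside $Z$, its trace on $X$ need not be a cross section of $\Pi$, because the selector may choose representatives in $Z\setminus X$ from atoms whose intersection with $X$ is non-empty; this is harmless when $X$ sits as a Borel or universally measurable subset of $Z$ (as in all of the paper's applications), but it is an extra hypothesis relative to the literal abstract statement. Operationally none of this affects the paper, which quotes \cref{thm:disintegration} from \cite{Bianchini-Caravenna} and, when a concrete section is actually needed (\cref{rmk:disintegrationLevelset}, and the $\sigma$-compact cross section constructed in the proof of \cref{thm:cd_locTocd_1}), obtains it from explicit selection theorems for closed and analytic sets rather than from Rokhlin.
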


\begin{remark}[Disintegration over level sets]\label{rmk:disintegrationLevelset}
If $(X, d)$ is Polish
    and the partition $\Pi$ given as level sets of a continuous function $\mathfrak{Q}:X \rightarrow \mathbb{R}$, then, by \cite[{Theorem}
    5.4.3]{srivastava2013course}, $\Pi$ admits a Borel cross-section $S$ and
    Borel section map $\mathfrak{S}$. In particular, there is a unique
    disintegration of $\mathfrak{p}$ strongly consistent with $\mathfrak{S}$.
  
\end{remark}

\subsection{Transport Ray and $\cd^1(K,N)$}\label{section:Raydecomposition}

For any $1$-Lipschitz function $u:(X,d)\rightarrow \R$, define the
\emph{transport relation} $R_u$ and the \emph{transport set} $\mathcal{T}_u$ as
\begin{equation} R_u \coloneqq \{(x, y) \in X \times X : |u (x) - u (y) | = d (x, y)
   \}, \qquad \mathcal{T}_u \coloneqq P_1  (R_u \setminus \{x = y\}),
\end{equation}
where $P_i$ is the projection onto the $i$-th component. Denote $R_u (x) \coloneqq
\{y \in X : (x, y) \in R_u \}$ as the section of $R_u$ through $x$ in the
first coordinate.

Notice $R_u$ is not necessarily an equivalence relation as the transitivity may be violated. To remedy this, define the \emph{non-branched transport set} by removing those branched points:
\begin{equation} 
\mathcal{T}^{b}_u \coloneqq \{x \in \mathcal{T}_u : \forall z, w
   \in R_u (x), (z, w) \in R_u \} 
\end{equation}
and hence the corresponding non-branched transport relation
\begin{equation} R^b_u \coloneqq R_u \cap (\mathcal{T}^{b}_u \times
   \mathcal{T}^{b}_u) . \end{equation}
\begin{remark}\label{rmk:measurablityofTS}
  We refer to {\cite{Bianchini, cavalletti2013monge}} and \cite[Section 7]{cavalletti2021globalization} for following statements:
  \begin{itemize}
    \item When $(X, d)$ is proper, $\mathcal{T}_u$ is $\sigma$-compact, and
    $\mathcal{T}_u^b$, $R^{b}_u$ are Borel;
    
    \item $R^b_u$ is an equivalence relation on $\mathcal{T}^b_u$ which
    induces a partition $\sqcup_x R^b_u (x)$ of $\mathcal{T}^b_u$;
    
    \item When $(X, d)$ is geodesic, for any $x \in \mathcal{T}^b_u$, $R_u
    (x)$ is a single (unparameterized) geodesic of positive length, so that $(R_u (x), d)$ is isometric to a closed interval in $(\mathbb{R}, | \cdot |)$ and $(R^b_u (x), d)$ is a subinterval.
  \end{itemize}
\end{remark}
We call $R$ a \emph{transport ray} if $(R,d)$ is isometric to a closed interval in $(\R,|\cdot|)$ of positive length and it is maximal under the partial order $\leq_u$, where $x\leq_u y$ if $u(x)-u(y)=d(x,y)$.
\begin{definition}
  Given a continuous function $\phi : (X, d) \rightarrow \mathbb{R}$ so that
  $\{ \phi = 0 \}\neq \emptyset$, define the \emph{signed distance function} (from zero-level set of $\phi$) as
  \begin{equation}
  d_{\phi}: X \rightarrow \mathbb{R}, \quad d_{\phi} (x) \coloneqq\mathrm{dist} (x, \{\phi = 0\}) \mathrm{sign}(\phi) . 
 \end{equation}
\end{definition}

When $(X,d)$ is a length space, any signed distance function $d_{\phi}$ is $1$-Lipschitz (see \cite[Lemma 8.4]{cavalletti2021globalization}).
If further $\m(X)<\infty$, \cref{thm:disintegration} gives a disintegration of $\m$ on $\mathcal{T}^b_{d_{\phi}}$ w.r.t. the partition by $R^b_{d_{\phi}}$, which leads to the $\cd^1$-condition introduced in \cite{cavalletti2021globalization}.
We modify this condition by relaxing conditional measures to be only locally finite, instead of probabilities.
\begin{definition}\label{def:cd1}
    A m.m.s. $(X,d,\mathfrak{m})$ with $\spt(\m)=X$ satisfies $\cd^1(K,N)$ if for any $1$-Lipschitz signed distance function $u=d_{\phi}$, with the associated partition $\{R^b_u(q)\}_{q\in Q}$ of $\mathcal{T}^b_u$ by ray decomposition, there exist a probability space $(Q,\mathcal{Q},\q)$ and a $\q$-unique disintegration $\mathfrak{m}\llcorner\mathcal{T}_u =\int_Q\mathfrak{m}_q\mathfrak{q}(\d q)$ on $
    \{\overline{R^b_u(q)}\}_{q\in Q}$ s.t.
  \begin{enumerate}
    \item\label{itm:cd1_1}  
     $Q$ is a section of the above partition so that $Q\supseteq \bar{Q}\in \mathcal{B}(\mathcal{T}^b_u)$ with $\bar{Q}$ an $\m$-section with $\m$-measurable quotient map and $\mathcal{Q}\supseteq \mathcal{B}(\bar Q)$;
    \item\label{itm:cd1_2} for $\mathfrak{q}$-a.e. $q \in Q$, $\overline{R^b_u(q)}=R_u(q)$ as a transport ray;
      \item\label{itm:cd1_3} for $\q$-a.e. $q\in Q$, $\mathfrak{m}_q$ is non-null, supported on $\overline{R^b_u(q)}$; 
      \item\label{itm:cd1_4} for $\mathfrak{q}$-a.e. $q \in Q$, $(\overline{R^b_u(q)}, d, \mathfrak{m}_q)$
    is a one-dimensional $\cd (K, N)$ m.m.s.;
      \item\label{itm:locallyfinite} for every bounded subset $K \subset X$, there exists $C_K \in (0, \infty)$ s.t.
   \begin{equation}\label{ineq:locallyfinite} 
       \mathfrak{m}_q (K) \leq C_K, \quad \text{for $\mathfrak{q}$-a.e. $q$}.
       \end{equation}
  \end{enumerate}
\end{definition}
\begin{remark}\label{rmk:onCD^1_0}
     The reference measure $\m$ on any $\cd^1(K,N)$ space must be locally finite, simply by \eqref{ineq:locallyfinite} and taking $u=d(\cdot,o)$ for some $o\in X$. 
     And by \cref{thm:disintegration}, the disintegration is strongly consistent with the quotient measure because of \eqref{itm:cd1_1}.
\end{remark}

\begin{remark}\label{rmk:onCD^1}
    For any $u$ and disintegration from \cref{def:cd1}, $\m(\mathcal{T}_u)=\m(\mathcal{T}^b_u)$.
    Indeed, denoting by $\tilde Q\subset Q$ the set of $q$ that \eqref{itm:cd1_2}-\eqref{itm:cd1_4} hold, then
    \begin{align}
        \m(\mathcal{T}_u)&=\int_Q \m_q(\mathcal{T}_u)\q(\d q)=\int_{\tilde Q} \m_q(\overline{R^b_u(q)})\q(\d q)\\
        &=\int_{\tilde Q} \m_q( R^b_u(q))\q(\d q)=\int_{\tilde Q} \m_q( \mathcal{T}^b_u)\q(\d q)=\m(\mathcal{T}^b_u),
    \end{align}
    where we have used the fact that a measure carrying $\cd(K,N)$ does not charge points.
\end{remark}
\subsection{$\cd_{\loc}(K,N)$ implies $\cd^1(K,N)$}\label{section:cd_loctocd1}
\begin{theorem}\label{thm:cd_locTocd_1}
  Let $(X, d, \mathfrak{m})$ be an e.n.b. $\cd_{\loc}(K, N)$ length m.m.s. such that $\m$ is locally finite with full-support, and $u:(X, d)\rightarrow\mathbb{R}$ be a $1$-Lipschitz function. 
  Then there exists a disintegration of $\m\llcorner \mathcal{T}_u$ satisfying \eqref{itm:cd1_1}-\eqref{itm:locallyfinite} of \cref{def:cd1}.
  In particular, under these assumptions, $\cd_{\loc}(K,N)$ implies $\cd^1(K,N)$.
\end{theorem}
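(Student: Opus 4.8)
The strategy follows the classical needle decomposition of Bianchini–Cavalletti and its refinement in \cite{cavalletti2021globalization}, but with two adjustments forced by the infinite-volume setting: the reference measure must be renormalized before disintegrating, and local finiteness of the conditional measures must be extracted quantitatively. I would proceed as follows. First, since $(X,d)$ is a $\cd_{\loc}(K,N)$ length space, it is proper and geodesic, so by \cref{rmk:measurablityofTS} the transport set $\mathcal{T}_u$ is $\sigma$-compact and $\mathcal{T}_u^b$, $R_u^b$ are Borel, with $R_u^b$ an equivalence relation whose classes are subintervals of transport rays. Fix a point $o\in X$ and a weight $w(x)\coloneqq c\,e^{-d(o,x)^2}$ with $c>0$ chosen so that $w\,\m$ is a probability measure $\mathfrak{p}$; this is the normalization used in \cite{Cavalletti-Mondino2020}. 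Apply \cref{thm:disintegration} (in the form of \cref{rmk:disintegrationLevelset}, using that the ray map can be taken with values in a continuous quotient once we fix a Borel section via \cite[Section 7]{cavalletti2021globalization}) to disintegrate $\mathfrak{p}\llcorner\mathcal{T}_u^b$ strongly consistently over the partition $\{R_u^b(q)\}_{q\in Q}$, getting $\mathfrak{p}=\int_Q\mathfrak{p}_q\,\mathfrak{q}(\d q)$ with each $\mathfrak{p}_q$ concentrated on $\overline{R_u^b(q)}$; then set $\mathfrak{m}_q\coloneqq w^{-1}\mathfrak{p}_q$ (restricted to the ray), which gives a $\mathfrak{q}$-unique disintegration of $\m\llcorner\mathcal{T}_u^b$, and finally extend to $\mathcal{T}_u$ using that $\m(\mathcal{T}_u\setminus\mathcal{T}_u^b)=0$ (this last fact, as in \cite[Section 7]{cavalletti2021globalization}, comes from the $\mcp(K,N)$ property of the space, which holds since e.n.b. $\cd_{\loc}(K,N)$ implies $\mcp(K,N)$). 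This establishes \eqref{itm:cd1_1}, \eqref{itm:cd1_2}, \eqref{itm:cd1_3}.

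For \eqref{itm:cd1_4}, the one-dimensional $\cd(K,N)$ property of $(\overline{R_u^b(q)},d,\mathfrak{m}_q)$ for $\mathfrak{q}$-a.e.\ $q$, I would localize the curvature-dimension inequality to rays exactly as in \cite{Bianchini,cavalletti2013monge,cavalletti2021globalization}. The key mechanism is that $\cd_{\loc}(K,N)$ gives the Rényi-entropy inequality for marginals supported in small neighborhoods, and by covering a transport ray with finitely many such neighborhoods and chaining the local inequalities along the one-dimensional transport (using the continuity of dynamics from \cref{cor:densityconti} to guarantee that intermediate densities are well-defined and the gluing is consistent), one upgrades to the global one-dimensional $\cd(K,N)$ along each ray. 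Here the essential non-branching hypothesis and \cref{lemma:injective} are what allow passing from a statement about transport plans on $X$ to a statement about conditional one-dimensional measures; crucially, by point (2) in the introduction, this one-dimensional analysis is insensitive to whether $\m(X)=\infty$, since everything happens on a fixed bounded ray and the renormalization $w$ is smooth and bounded away from $0$ and $\infty$ on bounded sets, so it does not affect the density ratios entering the $\tau$-inequalities. I would cite \cite[Part III]{cavalletti2021globalization} for the detailed one-dimensional bookkeeping rather than reproduce it.

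The genuinely new point, and the main obstacle, is \eqref{itm:locallyfinite}: the uniform bound $\mathfrak{m}_q(K)\le C_K$ for $\mathfrak{q}$-a.e.\ $q$ on each bounded $K$. This does not follow from the disintegration alone — a priori $\mathfrak{m}_q$ could concentrate wildly as $q$ varies. The way around this is to use the one-dimensional $\cd(K,N)$ regularity from \eqref{itm:cd1_4}: on a one-dimensional $\cd(K,N)$ space the density (w.r.t.\ arclength) has a modulus of continuity controlled only by $K$, $N$ and the diameter of the ray, so the density of $\mathfrak{m}_q$ is log-Lipschitz with a constant depending only on the length of $\overline{R_u^b(q)}\cap(\text{a neighborhood of }K)$, which is uniformly bounded when $K$ is bounded because $X$ is proper. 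Combined with the normalization $\mathfrak{p}_q(X)=1$ — which forces a lower bound on the measure of the part of the ray near $o$ where $w$ is not too small — this pins down $\mathfrak{m}_q(K)$ from above uniformly. Concretely: the total mass being $1$ and $w$ being bounded below on any fixed large ball forces $\mathfrak{m}_q$ to have controlled mass on that ball; then log-Lipschitz regularity of the one-dimensional density propagates that control to any bounded $K$. I would isolate this as the delicate estimate, since it is exactly where the passage from finite to locally finite volume is non-trivial. Once \eqref{itm:cd1_1}--\eqref{itm:locallyfinite} are in hand, the final sentence — that $\cd_{\loc}(K,N)$ implies $\cd^1(K,N)$ — is immediate from \cref{def:cd1}, because $u$ was an arbitrary $1$-Lipschitz signed distance function (every signed distance function is $1$-Lipschitz on a length space by \cite[Lemma 8.4]{cavalletti2021globalization}).
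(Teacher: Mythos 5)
Your proposal follows essentially the same route as the paper: renormalize $\m$ to a probability by a weight bounded away from $0$ on bounded sets, disintegrate the normalized measure strongly consistently over the ray partition, divide out the weight, and localize $\cd_{\loc}(K,N)$ to rays via the ray map. Two points, however, deserve correction.

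First, the Gaussian weight $w(x)=c\,e^{-d(o,x)^2}$ is not automatically integrable against a general locally finite $\m$: the volume $\m(B_r(o))$ of a locally finite measure on a proper space can a priori grow faster than $e^{r^2}$. One can rescue this by first observing that the space satisfies $\mcp(K,N)$ (as you invoke elsewhere) and hence Bishop--Gromov, which caps volume growth at exponential rate, but you never say this and it is an extra dependency. The paper avoids the issue entirely by using a piecewise-constant weight $f(x)=\sum_n (2^{n+1}\m(\mathcal{T}_u^{b,n}))^{-1}\mathds 1_{\mathcal{T}_u^{b,n}}(x)$ on an annular decomposition, which normalizes any $\sigma$-finite measure on a proper space, needs no volume-growth input, and still satisfies $\inf_{K\cap\mathcal{T}_u^b}f>0$ for compact $K$. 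That is a small but genuine simplification you should adopt or else justify the Gaussian.

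Second, and more seriously, your treatment of \eqref{itm:locallyfinite} is based on a misunderstanding. You write that the uniform bound \enquote{does not follow from the disintegration alone} and therefore bring in log-Lipschitz regularity of the one-dimensional $\cd(K,N)$ densities to propagate a mass bound. In fact the uniform local finiteness falls out of the renormalization immediately: each $\mathfrak{p}_q$ (resp.\ $\mathfrak{n}_q$) is a \emph{probability}, $\m_q = w^{-1}\mathfrak{p}_q$, and $w$ (resp.\ $f$) is bounded below by a positive constant on any bounded set $K$, so
\begin{equation}
  \m_q(K)=\int_K w^{-1}\,\d\mathfrak{p}_q \le \bigl(\inf_{K} w\bigr)^{-1}\,\mathfrak{p}_q(K) \le \bigl(\inf_{K} w\bigr)^{-1} \eqqcolon C_K
\end{equation}
uniformly in $q$, with no recourse to ray-regularity whatsoever. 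This is exactly the one-line argument in the paper (\cref{rmk:onCD^1_0} alludes to the converse, and the proof of \cref{thm:cd_locTocd_1} states it directly from \eqref{eq:normalziedfctf}). Your auxiliary argument via one-dimensional $\cd(K,N)$ regularity is not wrong as such, but it is unnecessary and it obscures the reason the normalization was introduced in the first place: the weight is chosen precisely so that \eqref{itm:locallyfinite} is automatic, and \emph{that} is the \enquote{genuinely new point} required by the infinite-volume setting. The hard work in the theorem is not \eqref{itm:locallyfinite} but rather showing $\m_q\ll\mathcal L^1$, positivity/Lipschitz regularity of the ray densities, and the one-dimensional $\cd(K,N)$ inequality, which the paper handles in the appendix by an approximation/evolution argument along the ray map — a more technical matter than the brief citation to \cite[Part III]{cavalletti2021globalization} in your proposal suggests, since that part of the cited paper concerns the $\cd^1\Rightarrow\cd$ implication, not the ray-decomposition regularity which lives in \cite{Bianchini,CavallettiMondinoInven}.
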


Such disintegration, also called ray/needle decomposition, is extensively studied in e.g. \cite{Bianchini,cavalletti2013monge,CavallettiMondinoInven} under the assumption $\m(X)=1$.
However in our case, $\mathcal{T}^b_u$ could be unbounded with infinite volume, so we can not directly apply \cref{thm:disintegration}. 
Therefore we normalize the measure by adding a weight function following the approach in \cite{Cavalletti-Mondino2020}.
After such re-weighting, $\cd$-information can be passed to rays exactly as in the finite-volume case.

\begin{proof}
    As every $\cd_{\loc} (K, N)$ geodesic m.m.s. is proper, \cref{rmk:measurablityofTS} applies. 
    From \cite[Proposition 4.5]{cavalletti2013monge} (together with the comments above \cite[Corollary 7.3]{cavalletti2021globalization}), $\mathfrak{m}(\mathcal{T}_u\setminus \mathcal{T}^b_u)=0$.
    Hence it suffices to disintegrate $\m\llcorner \mathcal{T}^b_u$ w.r.t. the partition $\mathcal{T}^b_u=\sqcup_q R^b_u (q)$.
    
\textbf{Normalize $\m$ to apply the disintegration theorem.} Without loss of generality we assume $\mathfrak{m}(\mathcal{T}^b_u) = \infty$.
Then, for any fixed $x_0 \in X$, we can find an increasing sequence $(r_n)_{n\geq 1}$ of positive numbers, so that 
\begin{equation}\label{eq:annuli} \mathcal{T}^{b, n}_u \coloneqq \left\{\begin{array}{ll}
     \mathcal{T}^b_u \cap \{x \in X : r_n \leq d (x, x_0) < r_{n+1} \} & , n
     \in \mathbb{N}_+\\
     \mathcal{T}^b_u \cap \{x \in X : d (x, x_0) < r_1\} & , n = 0
   \end{array}\right. \end{equation}
has positively finite $\m$-measure for each $n\geq 0$.
Define $f$ by
\begin{equation} f (x) = \sum_{n \in \mathbb{N}} (2^{n + 1} \mathfrak{m}(\mathcal{T}^{b,
   n}_u))^{- 1} \mathds{1}_{\mathcal{T}^{b, n}_u} (x).
   \end{equation}
Clearly,
\begin{equation}\label{eq:normalziedfctf}
  \inf_{K\cap \mathcal{T}^b_u} f > 0, \text{ for any compact } K \subset X ; \quad
  \int_{\mathcal{T}^b_u} f (x) \mathfrak{m} (\d x) = 1.
\end{equation}
Hence, $\mathfrak{n} \coloneqq
f\mathfrak{m} \llcorner \mathcal{T}^b_u$ is a probability measure and \cref{thm:disintegration} can be applied to $\mathfrak{n}$.

\textbf{On the strong consistency.} First, from \cite[Proposition 4.4]{Bianchini}\footnote{The existence of such section map depends only on (1): selection theorem of partitions into closed sets and (2): continuity and local compactness of geodesics, but not on the finiteness of $\m(X)$.}, there exists an $\mathfrak{m}$-measurable section $\mathfrak{Q}: \mathcal{T}^b_u \rightarrow \mathcal{T}^b_u$ associated to the partition $\{R^b_u (q)\}_{q \in Q}$. 
From now on, we fix $Q$ as the image of $\mathfrak{Q}$ and endow $Q$ with $\sigma$-algebra $\mathcal{Q}\coloneqq \mathfrak{Q}_{\#}\mathcal{B}(\mathcal{T}^b_u)$. 
Take $\mathfrak{q}=\mathfrak{Q}_{\#} \mathfrak{n} \textsf{}$ to be the quotient
probability on $Q$.
By the $\mathfrak{m}$-measurability of $\mathfrak{Q}$, $\mathfrak{q}$ is Borel on $\mathcal{T}^b_u$. Thus $\mathfrak{q}$ is inner regular and we can find a $\sigma$-compact set $S \subset Q$, $\mathfrak{q}(Q \setminus S) = 0$.
Define a section $\mathfrak{S} \coloneqq \mathfrak{Q} \llcorner(\mathfrak{Q}^{- 1} (S))$ where $\mathfrak{Q}^{- 1} (S)$ has full $\mathfrak{n}$-measure. Then
  \begin{equation} 
  \mathrm{graph} (\mathfrak{S}) = \{(x, s) \in \mathcal{T}^b_u\times S:(x,s)\in R_u \} 
  \end{equation}
  is Borel, implying that $\mathfrak{Q}^{- 1} (S) = P_1 (\mathrm{graph}(\mathfrak{S}))$ is analytic and $\mathfrak{S}$ is Borel measurable by \cite[Theorem 4.5.2]{srivastava2013course}.
  That is to say, $S$ is a Borel $\mathfrak{n}$-section with Borel measurable section $\mathfrak{S}$ and hence $\mathcal{Q}\supset \mathcal{B}(S)$.
  \cref{thm:disintegration} applies to conclude that $q \mapsto \mathfrak{n}_q$ is the $\mathfrak{q}$-unique disintegration of $\mathfrak{n}$ strongly consistent with $\mathfrak{Q}$.
  In particular, \eqref{itm:cd1_1} is verified as $\mathfrak{n}$ and $\m\llcorner\mathcal{T}^b_u$ sharing same measurable and null sets.

Back to $\mathfrak{m} \llcorner \mathcal{T}^b_u$, owing to the everywhere positivity of $f$ on $\mathcal{T}^b_u$, we have
\begin{equation}\label{eq:disintafterweight}
\mathfrak{m} \llcorner \mathcal{T}^b_u = \int_Q \mathfrak{n}_q /
   f\mathfrak{q} (\d q).
   \end{equation}
Define $\mathfrak{m}_q \coloneqq\mathfrak{n}_q /f$. 
As measurability (w.r.t. $q \in Q$) is guaranteed, $q\mapsto \mathfrak{m}_q$ gives the unique disintegration of $\m\llcorner \mathcal{T}^b_u$ strongly consistent with $\mathfrak{Q}$ (recall \cref{rmk:uniqueofdisinte}).
From \eqref{eq:normalziedfctf}, $\m_q$ is uniformly-locally finite as \eqref{ineq:locallyfinite}.
Further, we can repeat \cite[Theorem 7.10]{cavalletti2021globalization}(which mainly needs \cref{prop:optimalmaps} but not finiteness of $\m(X)$, as it is proved by contradiction and localization) for \eqref{eq:disintafterweight} to show that for $\mathfrak{q}$-a.e. $q \in Q$, $R_u (q) = \overline{R^b_u (q)}$ and $\spt (\mathfrak{m}_q) = R_u (q)$. 

\textbf{Localize $\cd(K,N)$ to transport rays.} Let $S$ be the $\sigma$-compact cross section in the previous step.
 Define the \emph{ray map} $g : \mathrm{Dom}(g) \subset S \times \mathbb{R} \rightarrow \mathcal{T}^b_u$ via
  \begin{equation}\label{eq:raymap}
  \mathrm{graph} (g) \coloneqq \{ (q, t, x) \in S \times \mathbb{R} \times\mathcal{T}^b_u : (q, x) \in R_u, u (x) - u (q) = t \} .
 \end{equation}
\cref{rmk:measurablityofTS} ensures that each $x \in \mathcal{T}^b_u$ uniquely corresponds a pair $(\mathfrak{Q} (x), d) \in S \times \mathbb{R}$, with $d = u (x) - u (\mathfrak{Q} (x))$ and $| d | = d (x, \mathfrak{Q}(x))$. 
Hence $g$ is well-defined, bijective and Borel measurable because of its
Borel graph. 
For any $q \in S$, $I_q \coloneqq \mathrm{Dom} (g (q, \cdot))$ is an
interval in $\mathbb{R}$, and $I_q \ni t \mapsto g (q, t) \in R^b_u (q)$ is an
isometry, meaning $\mathcal{H}^1 \llcorner \{ R^b_u (q) \} = g (q, \cdot)_{\#}
(\mathcal{L}^1 \llcorner I_q)$.

It remains to show that for $\mathfrak{q}$-a.e. $q \in Q$, $\mathfrak{m}_q \ll g(q,\cdot)_{\#}\mathcal{L}^1$ and for those $q$, by denoting $\mathfrak{m}_q = g(q, \cdot)_{\#}(h_q \cdot \mathcal{L}^1 \llcorner I_q)$, $h_q$ is a $\cd (K, N)$ density\footnote{See Appendix for the definition of $\cd(K,N)$ densities.}. 
Such regularity problem for conditional measures can be solved by combining arguments in \cite[Theorem 5.7]{Bianchini} and \cite[Theorem 4.2]{CavallettiMondinoInven}. 
We refer to the appendix for more detailed demonstrations.
\end{proof}
Observe that \eqref{eq:disintafterweight} depends on the chosen normalization of the reference measure.
However, this affects the disintegration only by a constant factor on each ray.
Namely, given two disintegrations with a weight function $f$ and $g$ respectively
\begin{equation}
    \m\llcorner \mathcal{T}^b_u=\int_Q \mathfrak{n}^f_q/f\q^f(\d q)=\int_Q \mathfrak{n}^g_q/g\q^g(\d q)
\end{equation}
as constructed in the proof, where the quotient space $(Q,\mathcal{Q})$ does not rely on normalizations.
By the positivity of weight functions, $\q^f$ and $\q^g$ are mutually absolutely continuous.
Hence the essential uniqueness of consistent disintegration yields an equality between $(\m^f_q)_q$ and $(\m_q^g)_q$.

Nevertheless, existence result of the disintegration is sufficient for our purpose.
\section{From $\cd^1(K,N)$ to $\cd(K,N)$}\label{section:cd1tocd}
This section is devoted to the following main theorem, which together with \cref{thm:cd_locTocd_1} concludes the local-to-global property of $\cd(K,N)$.
\begin{theorem}\label{thm:cd1tocd}
Let $(X,d,\m)$ be an e.n.b. m.m.s. with $\m$ locally finite having full support. If it holds $\cd^1(K,N)$, then it holds $\cd(K,N)$.
\end{theorem}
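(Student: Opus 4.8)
The plan is to follow the strategy of \cite{cavalletti2021globalization}, reducing the global $\cd(K,N)$ inequality to a path-wise (one-dimensional) inequality along Kantorovich geodesics, and then to transfer this inequality to transport rays of a suitable signed distance function via the $\cd^1(K,N)$ disintegration. First I would record the structural consequences of $\cd^1(K,N)$: by \cref{rmk:onCD^1_0} the measure $\m$ is automatically locally finite, and one should check — as announced in the introduction — that the space is proper, geodesic (a length space), and satisfies $\mcp(K,N)$. Properness follows because $\cd^1$ with $u=d(\cdot,o)$ controls the one-dimensional densities on rays emanating from $o$ together with \eqref{ineq:locallyfinite}, which forces closed balls to be totally bounded; the $\mcp(K,N)$ property is obtained by disintegrating $\m$ along the needles of $u=d(\cdot,o)$ and using the one-dimensional $\cd(K,N)$ density bound \eqref{itm:cd1_4} together with \eqref{eq:density of MCP}-type estimates on each ray. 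This is the content analogous to \cite[Section 10]{cavalletti2021globalization}, now carried out with locally finite (rather than probability) conditional measures.

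Next, given $\mu_0,\mu_1\in\p_2(X,\m)$, by \cref{prop:densitychara} it suffices to prove the density inequality \eqref{eq:densityCD} along $\nu$-a.e. geodesic for the \emph{unique} $\nu\in\optgeo(\mu_0,\mu_1)$. The standard reduction (\cite[Part I]{cavalletti2021globalization}) uses the intermediate-time Kantorovich potentials $\varphi_t$ and the propagated potentials $\Phi_s^t$ from \cref{section:potentials}: one fixes an intermediate time, localizes to a level set $G_{\varphi,a_s}$, and observes that the restriction of the dynamics to such a level set is, up to the sign conventions, governed by the signed distance function $u = d_{\phi}$ for an appropriate choice of $\phi$ built from $\varphi_s$ (the change-of-variables / temporal-derivative computations here are purely metric and, as noted in point (3) of the introduction, do not see the total mass of $\m$). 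I would then apply $\cd^1(K,N)$ for this $u$ to obtain a disintegration $\m\llcorner\mathcal{T}_u=\int_Q\m_q\,\q(\d q)$ into one-dimensional $\cd(K,N)$ needles, and match the conditional densities of $\mu_0,\mu_1,\mu_t$ along each ray: because the space is essentially non-branching, $\nu$-a.e. geodesic lies in a single ray $\overline{R^b_u(q)}$, so \eqref{eq:densityCD} on $X$ reduces to the one-dimensional $\cd(K,N)$ convexity inequality for the density $h_q$ on $(\overline{R^b_u(q)},d,\m_q)$, which holds by \eqref{itm:cd1_4}. Finally one integrates the ray-wise inequalities against $\q$ and uses the uniqueness/strong consistency of the disintegration (\cref{thm:disintegration}, \cref{rmk:uniqueofdisinte}) to recover the statement on $X$.

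The step that requires genuine care in the infinite-volume setting — and where the bulk of the work lies — is the passage to the finite-volume case by exhaustion, i.e. controlling the conditional measures $\m_q$ uniformly as one varies $u$ and as supports become unbounded. Concretely, the arguments of \cite[Part II–III]{cavalletti2021globalization} are written for $\m(X)=1$, so I would argue: the marginals $\mu_0,\mu_1$ and all interpolants $\mu_t$ are concentrated on a fixed bounded set $U$ (by \cref{cor:densityconti} the dynamics stays in a compact tube), hence the transport set $\mathcal{T}_u$ relevant to the computation, after restricting $\nu$ to a compact $G\subset\geo(X)$ as in the proof of \cref{prop:densitychara}, meets only boundedly many rays in a bounded region; the uniform bound \eqref{ineq:locallyfinite} then plays exactly the role that $\m_q\in\mathcal{P}(X)$ played before, giving the needed upper-semicontinuity of $\mathcal{E}_N$ and the measurable selection of $\cd(K,N)$ densities. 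I expect the main obstacle to be verifying that the $\cd^1$ disintegration is compatible with the restriction-to-$G$ and exhaustion procedure — that one may localize $u$, $\mathcal{T}_u$ and the needles to a bounded set without destroying maximality of rays or the density bounds — so that the finite-volume machinery of \cite{cavalletti2021globalization} applies verbatim to each piece. Once this localization is in place, assembling the pieces and integrating is routine, exactly as in the cited reference under the indicated modifications.
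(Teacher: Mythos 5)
Your outline correctly identifies the skeleton of the argument (deduce $\mcp(K,N)$, reduce to a path-wise density inequality via \cref{prop:densitychara}, couple the $L^2$-disintegration of $\nu$ by level sets of $\varphi_s$ with the $L^1$-needle disintegration of $\m$ for $u=d_{\varphi_s-a_s}$), and your last paragraph rightly locates the infinite-volume headache in making the two disintegrations restrictable to a compact $G$ with uniform bounds from \eqref{ineq:locallyfinite}. But there is a genuine gap in the middle of your argument: you assert that after matching conditional densities, \eqref{eq:densityCD} ``reduces to the one-dimensional $\cd(K,N)$ convexity inequality for $h_q$ on $(\overline{R^b_u(q)},d,\m_q)$, which holds by \eqref{itm:cd1_4}.'' That is not the case. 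Matching the disintegrations (\cref{prop:comparion of measure}) yields the change-of-variable formula
\begin{equation}
\frac{\rho_s(\gamma_s)}{\rho_t(\gamma_t)} \;=\; \frac{h^{\varphi_s(\gamma_s)}_{\gamma_s}(t)\,\ell^2(\gamma)}{\partial_\tau|_{\tau=t}\Phi^\tau_s(\gamma_t)},
\end{equation}
and the Jacobian factor $\partial_t\Phi^t_s(\gamma_t)$ is not itself a $\cd$ density, nor even concave, for a single fixed $s$. Knowing that $h^{a_s}_{\gamma_s}$ is a $\cd(\ell^2_s K,N)$ density therefore does not transfer to $\rho_t(\gamma_t)^{-1/N}$ directly.

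What closes the gap in the paper (following Cavalletti--Milman Section 12) is the ``$L$--$Y$ decomposition'': one must exploit the dependence of the change-of-variable formula on the parameter $s$, together with third-order temporal estimates on $\varphi_t(\gamma_t)$ and $\Phi^t_s$ (the content of \cref{prop:derivative of Phi} and of Part I of \cite{cavalletti2021globalization}), to prove that $\rho_t(\gamma_t)^{-1}$ factors as $L(t)Y(t)$ with $L$ concave and $Y$ a $\cd(\ell^2(\gamma)K,N)$ density on $(0,1)$; only then does H\"older's inequality and upper semi-continuity at $t=0,1$ give \eqref{eq:CDKNintro}. This step is not subsumed by ``the density along the ray is $\cd(K,N)$'' and is where the bulk of the work lies, even though (as you note) it is pure one-dimensional analysis insensitive to the total mass of $\m$. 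Without it your argument does not close. Your proposal also does not handle the null-geodesic part $G_\varphi^0$, on which the change-of-variable and disintegration machinery degenerates and one must separately use the stationarity of $\mu_t\llcorner X_0$ from \cref{cor:densityconti}.
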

It turns out that the approach developed in \cite{cavalletti2021globalization} is powerful enough to work on locally finite spaces with very mild modifications once the $\cd^1$-condition is given by \cref{def:cd1}.
In subsequent sections, we sketch the proof with the absence of the finiteness of $\m$, following closely \cite{cavalletti2021globalization}, highlighting necessary modifications.
Note that the following part is by no means self-contained, so a parallel reading on the paper \cite{cavalletti2021globalization} is recommended for readers looking for details.

\subsection{$\cd^1(K,N)$ implies $\mcp(K,N)$}
We begin with recovering the $\mcp$-condition.
\begin{proposition}\label{prop:cd1tomcp}
If a m.m.s. $(X,d,\m)$ verifies $\cd^1(K,N)$, then it verifies $\mcp(K,N)$.
\end{proposition}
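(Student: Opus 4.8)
The plan is to reduce the $\mcp$-inequality to the one-dimensional information carried by the rays of an appropriately chosen signed distance function, which is exactly what $\cd^1(K,N)$ provides. Fix $o\in X$ and a Borel set $A$ with $0<\m(A)<\infty$; we must produce $\nu\in\optgeo(\mu_0,\delta_o)$, where $\mu_0=\m\llcorner A/\m(A)$, satisfying \eqref{ineq:mcp}. First I would take $u=d(\cdot,o)$, which is $1$-Lipschitz (and, since by \cref{rmk:onCD^1_0} the space is locally finite — and one checks geodesic, as in \cref{section:cd1tocd} — this is genuinely a signed distance function from $\{o\}$). Apply $\cd^1(K,N)$ to get the disintegration $\m\llcorner\mathcal{T}_u=\int_Q\m_q\,\q(\d q)$ on the closed rays $\{\overline{R^b_u(q)}\}$, strongly consistent with the quotient map, with each $(\overline{R^b_u(q)},d,\m_q)$ a one-dimensional $\cd(K,N)$ space and with the uniform local finiteness \eqref{ineq:locallyfinite}.

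The key geometric point is that for $u=d(\cdot,o)$, every transport ray emanates from (or passes through, with $o$ as one endpoint in the limit) the point $o$: if $x\neq o$, the minimizing geodesic from $x$ to $o$ lies in a single ray, so $R_u(x)$ is the geodesic segment $[o,x]$ up to the issue that $o$ itself may be a branch point. Thus, modulo an $\m$-null set, $A\subset\mathcal{T}_u$ (points $x$ with $x\neq o$ all lie on a ray toward $o$), and on each ray $\overline{R^b_u(q)}$ the $\mcp(K,N)$ estimate toward the endpoint $o$ is precisely the one-dimensional $\cd(K,N)\Rightarrow\mcp(K,N)$ statement in dimension one, applied to the density $h_q$ of $\m_q$ with respect to arclength. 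So the steps are: (1) write $\mu_0=\int_Q \mu_0^q\,\q(\d q)$ where $\mu_0^q\coloneqq \m_q\llcorner A/\m(A)$ (an unnormalized family summing to $\mu_0$); (2) on each ray with $\m_q(A)>0$, invoke the one-dimensional $\cd(K,N)$ property of $(\overline{R^b_u(q)},d,\m_q)$ together with the classical fact that a one-dimensional $\cd(K,N)$ m.m.s. satisfies $\mcp(K,N)$, obtaining a ray-wise optimal dynamical plan $\nu_q$ from $\mu_0^q/\m_q(A)$ to $\delta_o$ with $\m_q/\m_q(A)\geq(e_t)_\#(\tau^{(1-t)}_{K,N}(d(\gamma_0,\gamma_1))^N\nu_q)$; (3) glue: set $\nu\coloneqq \int_Q \m_q(A)\,\nu_q\,\q(\d q)/\m(A)$, check it is an optimal dynamical plan between $\mu_0$ and $\delta_o$ (optimality is automatic since the target is a Dirac mass, and every $\gamma$ in its support is a geodesic ending at $o$ contained in a ray, so no branching issues interfere), and integrate the ray-wise $\mcp$-inequalities against $\q$ using the disintegration to recover \eqref{ineq:mcp} on $\mathcal{T}_u$; (4) on $X\setminus\mathcal{T}_u$, which is $\m$-null and carries no mass of $\mu_0$, the inequality is trivial.

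The main obstacle I anticipate is measurability and the careful selection of the ray-wise plans $q\mapsto\nu_q$: one needs the family to depend $\q$-measurably on $q$ so that the glued $\nu$ is a well-defined Borel measure on $\geo(X)$, and one needs the ray map $g(q,\cdot)$ from \eqref{eq:raymap} (or its analogue for $u=d(\cdot,o)$) to transport the one-dimensional plans measurably back to $X$. This is handled exactly as in the finite-volume argument of \cite{cavalletti2021globalization}: the ray map is Borel, the one-dimensional $\mcp$-plan is canonically the "monotone rearrangement toward the endpoint", which depends measurably on the density $h_q$, and a measurable selection / Fubini-type argument across the disintegration assembles $\nu$. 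A secondary technical point is the behavior at the special endpoint $o$ (it may be a branching point of $u=d(\cdot,o)$, hence possibly excised into $\mathcal{T}_u\setminus\mathcal{T}^b_u$), but by \cref{rmk:onCD^1} $\m(\mathcal{T}_u)=\m(\mathcal{T}^b_u)$ and the single point $o$ is $\m$-null and $\mu_0$-null, so it does not affect the inequality; the geodesics in $\spt(\nu)$ still terminate at $o$ by construction. The local finiteness \eqref{ineq:locallyfinite} is what guarantees the conditional measures are genuine Radon measures on each ray so that the one-dimensional theory applies verbatim.
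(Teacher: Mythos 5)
Your proposal is correct and follows essentially the same route as the paper: take $u=d(\cdot,o)$, use the $\cd^1$ disintegration along transport rays, reduce $\mcp$ to the one-dimensional $\cd(K,N)\Rightarrow\mcp(K,N)$ on each ray, and glue the ray-wise plans $\nu_q$ weighted by $\m_q(A)/\m(A)$, invoking the measurable-selection machinery of \cite[Proposition 8.9]{cavalletti2021globalization} exactly as the paper does. The only step you skip is the preliminary reduction to $A$ bounded (via \cite[Remark 5.1]{rajala-jfa}), which the paper uses to ensure $\mu_0\in\p_2(X)$ and that all curves in $\spt(\nu)$ lie in a common bounded set; this is a small but necessary technicality, not a change of approach.
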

\begin{proof}
By definition, we need to show that for any $o\in X$ and $\mu_0\coloneqq \frac{\m\llcorner A}{\m(A)}$, there exists $\nu\in\optgeo(\mu_0,\delta_o)$ such that \eqref{ineq:mcp} is satisfied, where $A\subset X$ is an arbitrary Borel set with $0<\m(A)<\infty$.
We can further assume $A$ to be bounded by \cite[Remark 5.1]{rajala-jfa}.

Choosing $u=d(\cdot,o)$, the $\cd^1$-condition provides a disintegration of $\m$ on $\mathcal{T}_u=X$ s.t. for $\q$-a.e. $q\in Q$, $(R_u(q),d,\m_q)$ verifies $\cd(K,N)$, and in particular $\mcp(K,N)$.
Based on the uniform-local finiteness \eqref{ineq:locallyfinite}, the function $Q\ni q\mapsto \m_q(A)$ is $\q$-measurable and almost everywhere finite.
For all $q$ in 
\begin{equation}
    \bar Q\coloneqq\{q\in Q:\m_q(A)\in(0,\infty),\spt(\m_q)=R_u(q)=\overline{R_u^b(q)}\},
\end{equation}
define $\mu^q_0\coloneqq \frac{\m_q\llcorner A}{\m_q(A)}$.
By the maximality of $R_u(q)$, $o\in \spt(\m_q)$ and there exists a unique $\nu^q\in\optgeo(\mu^q_0, \delta_o)$ for $q\in \bar Q$.
Take $\nu=\int_{\bar Q}\nu^q\frac{\m_q(A)}{\m(A)}\q(\d q)$ and all curves in its support are contained in a common bounded subset of $X$.
Then going in lines of the proof of \cite[Proposition 8.9]{cavalletti2021globalization} validates that $\nu$ is a required optimal dynamical plan from $\mu_0$ to $\delta_o$.
\end{proof}
As a consequence, all statements in \cref{sec:nonbranchingdensity} now hold on e.n.b. $\cd^1(K,N)$ spaces and the underlying metric space must be Polish, proper and geodesic.

Let $\mu_0$ and $\mu_1$ be two arbitrary elements in $\mathcal{P}_2(X,\m)$ and $\nu\in\optgeo(\mu_0,\mu_1)$. 
Fix a Kantorovich potential $\varphi$ of the quadratic optimal transport from $\mu_0$ to $\mu_1$ and denote by $(\varphi_t)_{t \in [0, 1]}$ the family of intermediate-time Kantorovich potentials.

By \cref{prop:densitychara}, it is sufficient to show that the density $\rho_t$ of $\mu_t\coloneqq (e_t)_{\#}\nu$ w.r.t. $\m$ satisfies the distortion inequality
\begin{equation}\label{eq:CDKNintro}
  \rho^{- 1 / N}_t (\gamma_t) \geq \tau^{(1 - t)}_{K, N} (\ell
  (\gamma)) \rho^{- 1 / N}_0 (\gamma_0) +
  \tau^{(t)}_{K, N} (\ell (\gamma)) \rho^{- 1 / N}_1
  (\gamma_1) 
\end{equation}
for $\nu$-a.e. $\gamma\in \geo(X)$.

For this aim, we will localize the whole problem to transport paths, by coupling different disintegrations.
Since we only care the almost-everywhere statement of \eqref{eq:CDKNintro}, by \cref{sec:nonbranchingdensity}, we can work under the following convenient convention.
\begin{convention}\label{convention:goodcurves}
In the sequel, we restrict ourselves to a Borel subset of Kantorovich geodesics of full $\nu$-measure, still denoted by $G_{\varphi}$ with a slight abuse of notation, such that
\begin{enumerate}
    \item $e_t$ is injective on $G_{\varphi}$ for all $t\in[0,1]$;
    \item $(\rho_t)_t$ can be chosen that statements in (1) of \cref{cor:densityconti} hold for each $\gamma\in G_{\varphi}$.
\end{enumerate}
\end{convention}

\subsection{$L^2$-decomposition of transports.}\label{section:L2disint}
Based on discussions in \cref{section:potentials}, for fixed $s,t\in[0,1]$, we have two families of partitions given by level sets of continuous functions as follows
\begin{equation}\label{eq:partitions}
G_{\varphi}=\sqcup_{a_s\in\R}G_{\varphi,a_s},\quad 
    e_t (G_{\varphi}) = \sqcup_{a_s\in\R} e_t(G_{\varphi, a_s})
\end{equation}
 where $G_{\varphi,a_s}\coloneqq \{\gamma\in G_{\varphi}: \varphi_s(\gamma_s)=a_s\}$.

Replace $G_{\varphi}$ by any compact subset $G$ with $\nu(G)>0$ and by \cref{rmk:disintegrationLevelset}, there exist disintegrations of finite measures $\nu\llcorner G$ and $\m\llcorner e_t(G)$ strongly consistent with partitions \eqref{eq:partitions} respectively.
 Notice that all arguments in \cite[Section 10.2]{cavalletti2021globalization} can be repeated without any change so quotient measures are absolutely continuous to the one-dimensional Lebesgue measure $\mathcal{L}^1$ for both disintegrations induced. 
  More precisely, we can find $(\nu_{a_s})$ and $(\m^t_{a_s})$ concentrated on $G_{a_s}(\coloneqq G\cap G_{\varphi,a_s})$ and $e_t (G_{a_s})$ respectively so that
  \begin{equation}\label{eq:IntroL2disint}
  \nu = \int \nu_{a_s} \mathcal{L}^1 (\d a_s), \quad \mathfrak{m}
     \llcorner e_t (G) = \int \mathfrak{m}^t_{a_s} \mathcal{L}^1 (\d a_s).
  \end{equation}
The two families of conditional measures in \eqref{eq:IntroL2disint} are comparable under the relation 
\begin{align}
  \mu_t \llcorner e_t (G) & =(e_t)_{\#}(\nu \llcorner G)=\int_{\varphi_s (e_s (G))} (e_t)_{\#} \nu_{a_s} \mathcal{L}^1 (\d a_s)\\
  & = \rho_t\mathfrak{m}\llcorner e_t(G)=\int_{\varphi_s (e_s (G))} \rho_t \cdot \mathfrak{m}^t_{a_s}\mathcal{L}^1 (\d a_s).\label{eq:compareL2dis}
\end{align}
By \cref{rmk:uniqueofdisinte}, for $\mathcal{L}^1$-a.e. $a_s \in \varphi_s (e_s (G))$, $\rho_t \cdot \mathfrak{m}_{a_s}^t =(e_t)_{\#} \nu_{a_s}$.
\subsection{$L^1$-decomposition of $\mathfrak{m}$ via needle decomposition.} 
For any $s \in (0, 1)$ and $a_s \in \im (\varphi_s \circ e_s)$, denote $u \coloneqq d_{\varphi_s-a_s}$ as the signed
distance function from $\{ \varphi_s = a_s \}$. By \cite[Lemma 10.3]{cavalletti2021globalization}, for every $\gamma \in G_{\varphi, a_s}$ and $0 \leq r \leq t \leq 1$, $(\gamma_r, \gamma_t) \in R_u$. 
In particular, $e_{[0, 1]} (G_{\varphi, a_s}) \subset \mathcal{T}_u$. 

Again, when we restrict the $L^1$-disintegration to a compact subset, and with the uniform boundedness of conditional measures given by \eqref{ineq:locallyfinite}, a repetition of \cite[Propositon 10.4]{cavalletti2021globalization} can be performed as follows.

\begin{proposition}\label{prop:L^1disonG}
For any compact subset $G\subset G^+_{\varphi}$ with positive measure, $s\in(0,1)$ and $a_s\in \varphi_s(e_s(G))$, we have the following disintegration: 
\begin{equation}
  \label{disinte formula 3} \mathfrak{m} \llcorner e_{[0, 1]}
  (G_{a_s}) = \int_{[0, 1]} \mathfrak{m}^{a_s}_t \mathcal{L}^1 (\d t),
\end{equation}
where 
\begin{equation}\label{eq:transitioncondim}
  \mathfrak{m}^{a_s}_t = g^{a_s} (\cdot, t)_{\#}  (h^{a_s}_{\cdot} (t)\cdot\mathfrak{m}^{a_s}_s)
\end{equation}
so that
\begin{enumerate}
    \item $g^{a_s}:e_s(G_{a_s})\times [0,1]\rightarrow X$ is Borel measurable, mapping $(\beta,t)$ to $e_t(e^{-1}_s(\beta))$;
    \item  $(0,1)\ni t\mapsto \m^{a_s}_t$ is continuous under weak convergence, and for each $t$, $\m^{a_s}_t$ is concentrated on $e_t(G_{a_s})$;
    \item for $\m^{a_s}_s$-a.e. $\beta$, $h^{a_s}_{\beta}$ is a continuous $\cd(\ell^2_s(\beta)K,N)$ density on $(0,1)$ satisfying $h^{a_s}_{\beta}(s)=1$;
    \item\label{item:lemma3.5(1)} there exists a constant $C$ depending only on $K, N$ and $\max\{\ell (\gamma):\gamma \in G\}$,
    \begin{equation}\label{ineq:m^a_s_t}
    \| \mathfrak{m}^{a_s}_t \| \leq C\mathfrak{m} (e_{[0, 1]} (G_{a_s})),\quad \forall t\in(0,1).
    \end{equation}
\end{enumerate}
\end{proposition}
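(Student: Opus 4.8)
The plan is to reproduce the argument of \cite[Proposition 10.4]{cavalletti2021globalization}, the only structural change being that wherever that proof used the finiteness of $\m(X)$ (equivalently, that the $\cd^1$ conditional measures were probabilities) I would instead invoke the uniform‑local‑finiteness bound \eqref{ineq:locallyfinite}; the compactness of $G$ makes $e_{[0,1]}(G_{a_s})$ bounded, so \eqref{ineq:locallyfinite} is available throughout. First I would feed the $1$‑Lipschitz signed distance $u \coloneqq d_{\varphi_s - a_s}$ into the $\cd^1(K,N)$ hypothesis, obtaining a needle decomposition $\m\llcorner\mathcal{T}_u = \int_Q \m_q\,\q(\d q)$ with each $(\overline{R^b_u(q)},d,\m_q)$ a one‑dimensional $\cd(K,N)$ space, hence $\m_q = h_q\cdot\mathcal{H}^1\llcorner\overline{R^b_u(q)}$ for a $\cd(K,N)$ density $h_q$, and $\m_q(e_{[0,1]}(G_{a_s})) \le C_0$ for $\q$‑a.e.\ $q$ by \eqref{ineq:locallyfinite}. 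By \cite[Lemma 10.3]{cavalletti2021globalization}, $e_{[0,1]}(G_{a_s}) \subset \mathcal{T}_u$, while $u \equiv 0$ on $e_s(G_{a_s})$ by definition of $G_{\varphi,a_s}$; since $G\subset G^+_\varphi$, for $\gamma\in G_{a_s}$ the point $\gamma_s$ is the unique zero of $u$ on the transport ray through it, so $Q$ can be realized, up to restriction to the rays meeting the strip, as a subset of $e_s(G_{a_s})$ via the cross‑section at $\{u=0\}$, and along $\gamma$ the arclength coordinate on the ray is an affine function of $t$ of slope $\pm\ell_s(\gamma_s)=\pm\ell(\gamma)\neq0$.

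Next I would pass fiberwise from the arclength parametrization to the time parametrization. For the ray indexed by $\beta\in e_s(G_{a_s})$, substituting the affine change of variable into $h_q\,\mathcal{H}^1$ and then dividing by the resulting density's value at $t=s$ produces, by the scaling law of one‑dimensional $\cd(K,N)$ densities under $h\mapsto h(\lambda\,\cdot)$ together with their stability under restriction to subintervals, a continuous $\cd(\ell_s^2(\beta)K,N)$ density $h^{a_s}_\beta$ on $(0,1)$ with $h^{a_s}_\beta(s)=1$; this is legitimate because $0$ is interior to the parametrizing interval, where $\cd$ densities are positive and locally Lipschitz. Moving the normalization factor (together with the Jacobian of the change of variable) into the quotient measure defines a measure $\m^{a_s}_s$ on $e_s(G_{a_s})$ so that the needle decomposition, read in the time coordinate through $g^{a_s}(\beta,t)\coloneqq e_t(e_s^{-1}(\beta))$, becomes $\m\llcorner e_{[0,1]}(G_{a_s}) = \int_{e_s(G_{a_s})} g^{a_s}(\beta,\cdot)_{\#}\big(h^{a_s}_\beta(t)\,\mathcal{L}^1(\d t)\big)\,\m^{a_s}_s(\d\beta)$.

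Then I would exchange the order of integration (a Fubini step justified, as in \cite{cavalletti2021globalization}, by the Borel measurability of $g^{a_s}$ and of $q\mapsto\m_q$) to arrive at \eqref{disinte formula 3}–\eqref{eq:transitioncondim}, namely $\m^{a_s}_t = g^{a_s}(\cdot,t)_{\#}(h^{a_s}_\cdot(t)\cdot\m^{a_s}_s)$ concentrated on $e_t(G_{a_s})$; this settles (1) and (3). For (2), weak continuity of $t\mapsto\m^{a_s}_t$ follows from the $t$‑continuity of the densities $h^{a_s}_\beta$, the continuity of $g^{a_s}$, and dominated convergence, the dominating function being the $\cd(K,N)$ distortion bound that controls $h^{a_s}_\beta(t)$ on intervals of length at most $\max\{\ell(\gamma):\gamma\in G\}$ in terms of the $\m_q$‑mass of the strip, which is $\le C_0$. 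Integrating the same distortion estimate in $t$ gives (4): $\|\m^{a_s}_t\| \le C\,\m(e_{[0,1]}(G_{a_s}))$ with $C = C(K,N,\max\{\ell(\gamma):\gamma\in G\})$.

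The hard part is the bookkeeping of the Fubini step fusing the needle decomposition (indexed by rays $q\in Q$) with the target decomposition (indexed by $t\in[0,1]$): one must correctly realize the ray‑quotient as a subset of $e_s(G_{a_s})$, verify measurability of every object produced, and absorb the reparametrization Jacobians and normalization constants into $\m^{a_s}_s$ in a $\q$‑consistent way. What is new here is only the need to confirm that \eqref{ineq:locallyfinite} genuinely replaces the finiteness of $\m(X)$ at each point where the latter was used in \cite[Proposition 10.4]{cavalletti2021globalization} — essentially just in supplying the uniform bound required for dominated convergence in (2) and for the mass estimate (4).
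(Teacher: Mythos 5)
Your proposal follows essentially the same route as the paper: feed $u = d_{\varphi_s - a_s}$ into $\cd^1(K,N)$, restrict the needle decomposition to the strip $e_{[0,1]}(G_{a_s})$ (where \eqref{ineq:locallyfinite} together with compactness of $G$ guarantees $0 < \hat\m^{a_s}_q(e_{[0,1]}(\gamma^q)) < C_G$), reparametrize from arclength to time along the cross-section $e_s(G_{a_s})$, normalize so $h^{a_s}_\beta(s)=1$, and swap the order of integration by Fubini. The only cosmetic difference is that the paper normalizes in two stages — first to probability conditionals $\bar\m^{a_s}_q$ so that Lemma A.8 of \cite{cavalletti2021globalization} gives the uniform density bound cleanly, and only afterwards rescales to $h(s)=1$ and computes $\|\check\q^{a_s}\| = \m(e_{[0,1]}(G_{a_s}))$ for item (4) — whereas you absorb both normalizations at once; the two bookkeeping schemes are equivalent and your identification of where \eqref{ineq:locallyfinite} replaces finiteness of $\m(X)$ is exactly right.
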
 
\begin{proof}

\textbf{Restrict $L^1$-disintegration to curves in $G_{a_s}$.} 
By definition, one has a probability measure $\hat\q^{a_s}$ and a disintegration 
\begin{equation}\label{eq:proofL1dis1}
    \m\llcorner \mathcal{T}_{u}=\int_Q \hat\m^{a_s}_q\hat\q^{a_s}(\d q).
    \end{equation}
Since $e_{[0, 1]}(G_{a_s}) \subset \mathcal{T}_u$, we can restrict \eqref{eq:proofL1dis1} to $e_{[0, 1]}(G_{a_s})$ so that
\begin{equation}
    \m\llcorner e_{[0, 1]}(G_{a_s})=\int_Q \hat\m^{a_s}_q\llcorner e_{[0, 1]}(G_{a_s})\hat\q^{a_s}(\d q).
\end{equation}
If we denote
\begin{align}
    G^{1}_{a_s}\coloneqq \{\gamma\in G_{a_s}:\mathcal{T}^b_u\cap e_{[0,1]}(\gamma)\neq \emptyset\},\quad Q^1\coloneqq \{q\in Q: R^b_u(q)\cap e_{[0,1]}(G_{a_s})\neq \emptyset\},
\end{align}
then following exactly same arguments in Part 1-3 of the proof of \cite[proposition 10.4]{cavalletti2021globalization} on the ray decomposition and measurability we know $Q^1$ is $\hat\q^{a_s}$-measurable, $G^1_{a_s}$ is analytic, and there exists a Borel isomorphism $\eta : (Q^1, \mathcal{B}(Q^1)) \rightarrow (G_{a_s}^1,\mathcal{B}(G_{a_s}^1))$, mapping $q$ to $\gamma^q$.

On the other hand, there exists $\tilde Q\subset Q$ of full $\hat\q^{a_s}$-measure s.t. for each $q\in \tilde Q$, $\hat\m^{a_s}_q$ is non-null, supported on $R_u(q)=\overline{R^b_u(q)}$ and $(R_u(q),d,\hat\m^{a_s}_q)$ verifies $\cd(K,N)$.
Since each $\gamma^q\in G_{a_s}$ is contained in $R_u(q)$, $\{\gamma_q\}_{q\in \tilde Q}$ have disjoint interiors.
By \eqref{ineq:locallyfinite} and the fact that a non-null measure carrying $\cd(K,N)$ gives positive mass to open sets, we have 
\begin{equation}
   0<\hat\m^{a_s}_q(e_{[0,1]}(G_{a_s}))=\hat\m^{a_s}_q(e_{[0,1]}(\gamma^q))<C_G,\quad \forall q\in Q^1\cap \tilde Q,
\end{equation}
for some constant $C_{G}>0$.
Therefore, summarizing above discussions with \cref{rmk:onCD^1} gives
\begin{align}
     \mathfrak{m} \llcorner e_{[0, 1]} (G_{a_s})& = \int_{Q^1\cap \tilde Q} \hat\m^{a_s}_q
     \llcorner \{ \mathcal{T}^b_u \cap e_{[0, 1]} (G_{a_s}) \} \hat\q^{a_s}(\d q)\\
     &= \int_{Q^1\cap \tilde Q} \frac{\hat\m^{a_s}_q \llcorner e_{[0, 1]}(\gamma^q)}{\hat\m^{a_s}_q (e_{[0, 1]} (\gamma^q))} \hat\m^{a_s}_q(e_{[0, 1]} (\gamma^q)) \hat\q^{a_s}(\d q).\label{eq:proofL1dis2}
\end{align}
Denoting $\bar\m^{a_s}_q\coloneqq \frac{\hat\m^{a_s}_q \llcorner
  e_{[0, 1]}(\gamma^q)}{\hat\m^{a_s}_q (e_{[0, 1]} (\gamma^q))}$, and
  $\bar\q^{a_s} =\hat\m^{a_s}_q (e_{[0, 1]} (\gamma^q)) \hat\q^{a_s}\llcorner Q^1$, \eqref{eq:proofL1dis2} can be rewritten as
  \begin{equation}\label{eq:proofL1dis3}
    \mathfrak{m} \llcorner e_{[0, 1]} (G_{a_s})=\int_{Q^1\cap \tilde Q}
    \bar{\mathfrak{m}}^{a_s}_q  \bar{\mathfrak{q}}^{a_s} (\d q).\label{eq:proofL1dis3}
  \end{equation}
  \textbf{Change the variable and conditional measures.}
Pushing-forward via the Borel measurable bijection $e_{s}\circ \eta:Q^1\to e_s(G^1_{a_s})$ induces a space $(e_s(G^1_{a_s}),\mathcal{S},\check\q^{a_s})$, with $\mathcal{S}\coloneqq (e_s\circ \eta)_{\#}(\mathcal{Q}\cap Q^1)$ and $\check\q^{a_s}=(e_s\circ\eta)_{\#}\bar\q^{a_s}$.
  Correspondingly, \eqref{eq:proofL1dis3} can be expressed on the new measurable space:
  \begin{equation}\label{eq:proofL1dis4}
    \mathfrak{m} \llcorner e_{[0, 1]} (G_{a_s}) = \int_{e_s (G_{a_s})}\m^{a_s}_{\beta} \check\q^{a_s}  (\d \beta),
  \end{equation}
  where $\mathfrak{m}^{a_s}_{\beta} =\bar\m^{a_s}_{(e_s\circ\eta)^{-1}(\beta)}$ has unit mass and $\check\q^{a_s}$ is concentrated on $e_s\circ\eta(Q^1\cap\tilde Q)$ since $\bar\q^{a_s}$ and $\hat\q^{a_s}\llcorner Q^1$ are mutually absolutely continuous.

 With the new cross section $e_s (G_{a_s})$, we define a ray map $g^{a_s}$ as in \eqref{eq:raymap} but now with the time variable fixed on $[0, 1]$:
  \[ g^{a_s} : e_s (G_{a_s}) \times [0, 1] \rightarrow X, \quad (\beta, t)
     \mapsto e_t (e_s^{- 1} (\beta)) . \]
 Clearly, $g^{a_s}$ is Borel measurable. 
 For any $\beta \in e_s (G_{a_s})$, $t \mapsto g^{a_s} (\beta, t)$
  is an isometry between $([0, 1], | \cdot |)$ and $(\gamma^{\beta}\coloneqq e_s^{- 1} (\beta), d /\ell_s (\beta))$. 
  By assumption, for $\check\q^{a_s}$-a.e. $\beta$, $(\gamma^{\beta}, d, \mathfrak{m}^{a_s}_{\beta})$ verifies $\cd(K, N)$.
  After rescaling the metric, $(\gamma^{\beta}, d /\ell_s (\beta), \mathfrak{m}^{a_s}_{\beta})$ verifying $\cd(\ell_s^2(\beta) K, N)$.
 For those $\beta$, there exists a continuous function $\check h^{a_s}_{\beta}$ as a $\cd(\ell_s^2 (\beta) K, N)$ probability density on $(0,1)$ s.t.
  \begin{equation}
    \mathfrak{m}^{a_s}_{\beta} = g^{a_s} (\beta, \cdot)_{\#} (\check h^{a_s}_{\beta}
    \cdot \mathcal{L}^1 \llcorner [0, 1])\label{eq:conditional
    probabilityanddensity}
  \end{equation}
  and 
      \begin{equation}\label{proofL1dis4}
     \mathfrak{m} \llcorner e_{[0, 1]} (G_{a_s}) = \int_{e_s(G_{a_s})}  g^{a_s} (\beta, \cdot)_{\#} (\check h^{a_s}_{\beta}
    \cdot \mathcal{L}^1 \llcorner [0, 1])\check\q^{a_s}(\d \beta).
  \end{equation}
 \textbf{Reformulate the disintegration on $[0,1]$.}
   The item \eqref{itm:cd1_1} of \cref{def:cd1} allows us to repeat the step 8 of the proof of \cite[Proposition 10.4] {cavalletti2021globalization} to obtain the $\check\q^{a_s}\otimes \mathcal{L}^1$-measurability of $e_s(G_{a_s})\times [0,1]\ni(\beta, t)\mapsto \check h^{a_s}_{\beta}(t)$, where we also follow the convention that $\check h^{a_s}_{\beta}$ vanishes at endpoints.
  By Fubini, we can exchange the order of \eqref{eq:proofL1dis4} s.t. \eqref{disinte formula 3} is achieved with 
  \begin{equation}\label{eq:proofL1dis5}
      \mathfrak{m}^{a_s}_t = g^{a_s} (\cdot, t)_{\#}  (\check h^{a_s}_{\cdot} (t)\cdot\check\q^{a_s}).
  \end{equation}
 The map $\beta\mapsto \check h^{a_s}_{\beta}(s)$ is $\check\q^{a_s}$-measurable and for $\check\q^{a_s}$-a.e. $\beta$, $\check h^{a_s}_{\beta}$ is strictly positive on $(0,1)$. 
 Let $h^{a_s}_{\beta}\coloneqq\frac{\check h^{a_s}_{\beta}}{\check h^{a_s}_{\beta}(s)}$ for those $\beta$ and $\q^{a_s}\coloneqq\check h^{a_s}_{\beta}(s)\cdot\check\q^{a_s}$.
 Now $h^{a_s}_{\beta}(s)=1$ for $\q^{a_s}$-a.e. $\beta$, and $\check\q^{a_s}$, $\q^{a_s}$ are mutually absolutely continuous (both of them are finite measures) sharing same measurable and null sets.
 And $g^{a_s}(\cdot,t)_{\#}( h^{a_s}_{\cdot} (t)\cdot\q^{a_s})$ equals to $\m^{a_s}_t$ still.
 Hence, $\m^{a_s}_s=\q^{a_s}$ and the translation relation \eqref{eq:transitioncondim} is satisfied.
 
The continuity of $t\mapsto \m^{a_s}_t$ follows from the continuity of $t\mapsto h^{a_s}_{\beta}(t),g^{a_s}(\beta,t)$.
 Finally, by \cite[Lemma A.8]{cavalletti2021globalization}, probability densities $\check h^{a_{s}}_{\beta}(t)$ are bounded uniformly for $\beta,t$.
The uniform volume bound \eqref{ineq:m^a_s_t} of $\m^{a_s}_t$ is given by \eqref{eq:proofL1dis5} and the finiteness of 
$\check\q^{a_s}$:
 \begin{align}
     \check \q^{a_s}(e_{[0,1]}(G_{a_s})=\bar\q^{a_s}(Q^1)=\int_{Q^1}\hat\m^{a_s}_q (e_{[0, 1]} (\gamma^q)) \hat\q^{a_s}(\d q)=\m(e_{[0,1]}(G_{a_s})).&\qedhere
 \end{align}
\end{proof}
\subsection{Comparison between conditional measures.}
This section is to recover the comparison between $L^2$ and $L^1$ disintegrations based on \cite[Section 11]{cavalletti2021globalization}.

Recall that the $t$-propagated $s$-Kantorovich potential defined on $D({G_{\varphi}})$, by $\Phi^t_s (x) \coloneqq\varphi_t (x) + \frac{t - s}{2} \ell_t^2 (x)$, is jointly continuous and locally Lipschitz on $t$. 
The following differential properties will be crucial in the comparison argument.
Moreover, they are statements of metric spaces without any reference measure.

\begin{lemma}[cf. {\cite[Proposition 4.4]{cavalletti2021globalization}} ]
  \label{prop:derivative of Phi}Fix any $s \in (0, 1)$.
  \begin{enumerate} 
    \item For any $x\in X$, $t \mapsto \Phi^t_s (x)$ is differentiable iff $t \mapsto \ell^2_t (x)$ is differentiable on $G_{\varphi}(x)$ or $t=s\in G_{\varphi}(x)$, with derivatives
    \begin{equation}
      \partial_t \Phi^t_s (x) = \ell^2_t (x) + (t - s)  \frac{\partial_t
      \ell^2_t (x)}{2}, \quad \partial_t |_{t = s} \partial_t \Phi^t_s (x) =
      \ell^2_s (x) . \label{eq:derivative of Phi}
    \end{equation}
    \item For all $(x, t)\in D(G_{\varphi})$,
    \begin{align}
       \min\big\{\frac{s}{t},\frac{1-s}{1-t}+&\frac{t-s}{t(1-t)}\big\}\ell^2_t(x)\leq \liminf_{G_{\varphi}(x)\ni \tau\to t} \frac{\Phi^{\tau}_s(x)-\Phi^t_s(x)}{\tau-t}\\
       &\leq \limsup_{G_{\varphi}(x)\ni \tau\to t} \frac{\Phi^{\tau}_s(x)-\Phi^t_s(x)}{\tau-t}\leq \max\big\{\frac{s}{t},\frac{1-s}{1-t}+\frac{t-s}{t(1-t)}\big\}\ell^2_t(x).\label{eq:}
    \end{align}
  \end{enumerate}
\end{lemma}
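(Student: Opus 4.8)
The plan is to follow \cite[Proposition 4.4]{cavalletti2021globalization} and to deduce both items from two facts about the intermediate-time potentials that involve only the metric structure: \emph{(A)} for every $x$ and every $t\in G_{\varphi}(x)$ the map $t\mapsto\varphi_t(x)$ is differentiable with $\partial_t\varphi_t(x)=\tfrac12\ell_t^2(x)$; and \emph{(B)} the maps $t\mapsto t\,\ell_t(x)$ and $t\mapsto(1-t)\,\ell_t(x)$ are non-decreasing, resp.\ non-increasing, on $G_{\varphi}(x)$. Both would follow from the Hopf--Lax representations $\varphi_t(x)=\sup_{y}\bigl(\varphi(y)-\tfrac{d^2(x,y)}{2t}\bigr)$ and $\varphi_t(x)=\inf_{z}\bigl(\tfrac{d^2(x,z)}{2(1-t)}-\varphi^c(z)\bigr)$: by the structure of the transport set recalled in \cref{rmk:measurablityofTS} and the injectivity of $e_t$ on $G_{\varphi}$ (\cref{lemma:injective}), at a point $(x,t)\in D(G_{\varphi})$ every maximizer $y$ is the initial point of the essentially unique Kantorovich geodesic through $x$ at time $t$, so that $d(x,y)=t\,\ell_t(x)$ (and similarly $d(x,z)=(1-t)\,\ell_t(x)$ for the minimizer $z$). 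Inserting a time-$\tau$ extremizer as a competitor into the time-$t$ problem and conversely gives, for $\tau>t$, the squeeze $\tfrac{t\,\ell_t^2(x)}{2\tau}\le\tfrac{\varphi_\tau(x)-\varphi_t(x)}{\tau-t}\le\tfrac{\tau\,\ell_\tau^2(x)}{2t}$; letting $\tau\to t$ and using the continuity of $\ell$ on $D(G_{\varphi})$ (\cref{lemma:continity}) yields \emph{(A)}, while summing the two optimality inequalities at times $t$ and $\tau$ forces the monotonicities in \emph{(B)}.

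For item (1), I would write $\Phi^t_s(x)=\varphi_t(x)+\tfrac{t-s}{2}\ell_t^2(x)$. Since by \emph{(A)} the first summand is differentiable at every $t\in G_{\varphi}(x)$, the function $t\mapsto\Phi^t_s(x)$ is differentiable at $t$ if and only if $t\mapsto(t-s)\ell_t^2(x)$ is. When $t\neq s$ the prefactor $(t-s)$ is smooth and nonzero, so this happens exactly when $t\mapsto\ell_t^2(x)$ is differentiable at $t$, and the product rule together with \emph{(A)} gives $\partial_t\Phi^t_s(x)=\ell_t^2(x)+(t-s)\tfrac{\partial_t\ell_t^2(x)}{2}$. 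When $t=s$ the prefactor vanishes, so by continuity of $\ell$ the difference quotient $\tfrac{(\tau-s)\ell_\tau^2(x)}{\tau-s}=\ell_\tau^2(x)$ converges to $\ell_s^2(x)$ regardless of how $\ell_t^2(x)$ behaves near $s$, whence $\Phi^t_s(x)$ is differentiable at $s$ with $\partial_t|_{t=s}\Phi^t_s(x)=\tfrac12\ell_s^2(x)+\tfrac12\ell_s^2(x)=\ell_s^2(x)$.

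For item (2), I would divide $\Phi^\tau_s(x)-\Phi^t_s(x)=\bigl(\varphi_\tau(x)-\varphi_t(x)\bigr)+\tfrac12\bigl((\tau-s)\ell_\tau^2(x)-(t-s)\ell_t^2(x)\bigr)$ by $\tau-t$ and rearrange to
\[ \frac{\Phi^\tau_s(x)-\Phi^t_s(x)}{\tau-t}=\frac{\varphi_\tau(x)-\varphi_t(x)}{\tau-t}+\frac{\ell_\tau^2(x)}{2}+\frac{t-s}{2}\cdot\frac{\ell_\tau^2(x)-\ell_t^2(x)}{\tau-t}. \]
As $\tau\to t$ the first two terms converge to $\ell_t^2(x)$ by \emph{(A)} and the continuity of $\ell$. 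From \emph{(B)} one obtains, for $\tau>t$, $\tfrac{t}{\tau}\ell_t(x)\le\ell_\tau(x)\le\tfrac{1-t}{1-\tau}\ell_t(x)$ (and the reversed relation for $\tau<t$); squaring and a short computation then bound $\tfrac{\ell_\tau^2(x)-\ell_t^2(x)}{\tau-t}$ from below by a quantity tending to $-\tfrac{2}{t}\ell_t^2(x)$ and from above by one tending to $\tfrac{2}{1-t}\ell_t^2(x)$. Substituting these into the identity and distinguishing the sign of $t-s$ (so that the appropriate one-sided bound on $\tfrac{\ell_\tau^2(x)-\ell_t^2(x)}{\tau-t}$ is taken for the $\liminf$ and for the $\limsup$), the elementary identities $1+\tfrac{t-s}{1-t}=\tfrac{1-s}{1-t}$ and $1-\tfrac{t-s}{t}=\tfrac{s}{t}$ reassemble the constants into the claimed two-sided bound, the $\min$ and $\max$ selecting the relevant branch according to whether $s<t$ or $s>t$.

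The step I expect to be the main obstacle is precisely the identification $d(x,y)=t\,\ell_t(x)$ (and $d(x,z)=(1-t)\,\ell_t(x)$) for the Hopf--Lax extremizers behind \emph{(A)} and \emph{(B)}: a priori such an extremizer could be an endpoint of a Kantorovich geodesic lying outside the restricted non-branching set $G_{\varphi}$, and hence of a different length. Ruling this out is exactly the content of \cite[Section 4]{cavalletti2021globalization} and rests on the measurability and geometry of $\mathcal{T}_u$ and $\mathcal{T}_u^b$ from \cref{rmk:measurablityofTS} together with \cref{lemma:injective}. Since this argument concerns only the metric structure and uses no reference measure, it carries over verbatim to the present, possibly infinite-volume, setting.
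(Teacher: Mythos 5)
Your sketch follows the same route as the cited Cavalletti--Milman Proposition~4.4, which is all the paper itself relies on (the lemma is stated with a ``cf.'' and no proof is given). Your reduction of both items to the two purely metric facts---\emph{(A)} the restricted differentiability $\partial_t\varphi_t(x)=\tfrac12\ell_t^2(x)$ along $G_\varphi(x)$, and \emph{(B)} the monotonicity of $t\ell_t(x)$ and $(1-t)\ell_t(x)$---is correct, the computations for item (1) check out (including the product-rule treatment of the vanishing prefactor at $t=s$), and in item (2) you actually obtain the slightly tighter constants $\min\{\tfrac{s}{t},\tfrac{1-s}{1-t}\}$ and $\max\{\tfrac{s}{t},\tfrac{1-s}{1-t}\}$, which imply the stated bounds. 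You also correctly flag the genuine non-trivial input, namely that Hopf--Lax extremizers at $(x,t)\in D(G_\varphi)$ sit at distances $t\,\ell_t(x)$ and $(1-t)\,\ell_t(x)$; this is the content of \cite[Section 3--4]{cavalletti2021globalization} (their continuity/coincidence of $\ell^\pm$ on $D(G_\varphi)$), and since that argument involves no reference measure, it transfers unchanged to the infinite-volume setting, which is exactly the point the paper makes by citing rather than reproving.
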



\begin{proposition}
\label{prop:comparion of measure} Let G be a compact subset of $G_{\varphi}^+$ with $\nu(G)>0$. 
For any $s \in (0, 1)$, $\mathcal{L}^1$-a.e. $t \in (0, 1)$ including $t = s$ and $\mathcal{L}^1$-a.e. $a_s \in \varphi_s (e_s (G))$, we have
  \begin{equation}\label{eq:comparison}
  \mathfrak{m}^{a_s}_t = \partial_t \Phi^t_s \cdot \mathfrak{m}^t_{a_s},
  \end{equation}
  where $\partial_t \Phi^t_s(x)$ exists and is positive for $\m^{t}_{a_s}$-a.e. $x$.
\end{proposition}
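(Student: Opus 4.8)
The plan is to establish \eqref{eq:comparison} by showing that both disintegrations in question—the $L^1$-decomposition $\m\llcorner e_{[0,1]}(G_{a_s}) = \int \m^{a_s}_t\,\mathcal{L}^1(\d t)$ from \cref{prop:L^1disonG} and the $L^2$-decomposition $\m\llcorner e_t(G) = \int \m^t_{a_s}\,\mathcal{L}^1(\d a_s)$ from \cref{section:L2disint}—are two different ``slicings'' of the single measure $\m$ restricted to $e_{[0,1]}(G)$, and to relate their conditional measures by a change-of-variables computation on the two-dimensional parameter domain. Concretely, I would first reconstruct, out of the two one-parameter families, a single disintegration of $\m\llcorner e_{[0,1]}(G)$ over the two-dimensional quotient parametrized by pairs $(a_s, t)$. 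Starting from the $L^1$-side, the conditional measure $\m^{a_s}_t$ is, by \eqref{eq:transitioncondim}, a density $h^{a_s}_\beta(t)$ times the reference measure pushed forward by the ray map; fibering further in $a_s$ via the $L^2$-decomposition of $\m^{a_s}_s = \q^{a_s}$ produces measures living on the single point $\gamma_t$ with $\gamma\in G_{a_s}$—i.e. on the level set of the map $(x\mapsto(\varphi_s(e_s(e_t^{-1}(x))), t)) = (\Phi^t_s(x), t)$. From the $L^2$-side one obtains a disintegration of the same measure over the same two-dimensional quotient, now fibered first in $a_s$ (via $\Phi^t_s$, whose level sets are exactly $e_t(G_{a_s})$) and then in $t$. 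Uniqueness of consistent disintegrations (\cref{rmk:uniqueofdisinte}) forces these two reconstructions to agree $\mathcal{L}^1\otimes\mathcal{L}^1$-a.e.

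The heart of the matter is then the change of variables between the coordinate systems $(a_s, t)$ as it sits inside $X\times(0,1)$: on the $L^2$-side the natural coordinates at a fixed time $t$ are $(a_s, t)$ where $a_s = \Phi^t_s(x)$, while on the $L^1$-side, moving along a ray $\gamma^\beta$ the relevant coordinate is arclength, i.e. one is parametrizing $e_{[0,1]}(G_{a_s})$ by $(\beta, t)$. The Jacobian relating $\mathcal{L}^1(\d a_s)\otimes\mathcal{L}^1(\d t)$ (the product measure governing the $L^2$-then-time slicing) to $\mathcal{L}^1(\d t)\otimes(\text{arclength along ray})$ (the $L^1$-slicing) is precisely $\partial_t\Phi^t_s(x)$, by \eqref{eq:derivative of Phi}: at a point $x = \gamma_t$, holding the ray fixed and varying $t$, the value $a_s = \Phi^t_s(x) = \varphi_s(\gamma_s)$ is constant, so the map $(\beta, t)\mapsto(\Phi^t_s(g^{a_s}(\beta,t)), t)$ has the degenerate-triangular form whose Jacobian determinant collapses to $\partial_t\Phi^t_s$. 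The rigorous justification of this substitution I would carry out fiberwise: fix a ray $\gamma$, apply the one-dimensional area/coarea formula to the Lipschitz function $t\mapsto\Phi^t_s(\gamma_t)$—which is locally Lipschitz by \cref{lemma:continity} and differentiable $\mathcal{L}^1$-a.e. with derivative $\partial_t\Phi^t_s(\gamma_t)$ given by \cref{prop:derivative of Phi}(1)—then integrate against $\q^{a_s}$ and against $\mathcal{L}^1(\d a_s)$, invoking Fubini to exchange orders. Positivity of $\partial_t\Phi^t_s$ for $\m^t_{a_s}$-a.e.\ $x$ follows from the two-sided bound \eqref{eq:derivative of Phi} combined with \eqref{eq:} of \cref{prop:derivative of Phi}: since $\ell^2_t(x) > 0$ on $G^+_\varphi$ and the coefficients $\min\{s/t, (1-s)/(1-t) + (t-s)/(t(1-t))\}$ are strictly positive for $t\in(0,1)$, the lower Dini derivative is strictly positive, which—together with differentiability a.e.—forces $\partial_t\Phi^t_s(x) > 0$ wherever it exists.

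I expect the main obstacle to be the \textbf{measurability and null-set bookkeeping} needed to make the ``a.e.\ $a_s$, a.e.\ $t$ including $t=s$'' quantifiers rigorous and to ensure that the exceptional sets where differentiability of $t\mapsto\Phi^t_s$ fails, or where $\check h^{a_s}_\beta$ is not strictly positive, or where the various conditional measures are not uniquely pinned down, all assemble into a genuinely $\mathcal{L}^1\otimes\mathcal{L}^1$-null set in the $(a_s, t)$-plane. This is exactly the kind of Fubini-over-disintegrations argument that is delicate: one must check joint measurability of $(a_s, t, x)\mapsto\partial_t\Phi^t_s(x)$ (using that it equals a pointwise limit of difference quotients of the jointly continuous $\Phi$, hence is Borel), measurability of the slicing maps, and that restricting the full disintegration to $e_{[0,1]}(G_{a_s})$—an operation already performed in the proof of \cref{prop:L^1disonG}—is compatible with the $L^2$-slicing. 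The inclusion of the endpoint $t = s$ requires separate attention, using the second identity in \eqref{eq:derivative of Phi}, $\partial_t|_{t=s}\Phi^t_s(x) = \ell^2_s(x) > 0$, so that \eqref{eq:comparison} at $t=s$ reduces to the already-established relation $\m^{a_s}_s = \q^{a_s}$ matched against $\m^s_{a_s}$ weighted by $\ell^2_s$; I would handle it by a direct argument rather than as a limit. Apart from this bookkeeping, each individual step is a routine application of results already in the excerpt, and I would follow \cite[Section 11]{cavalletti2021globalization} closely, the only genuine adaptation being that conditional measures $\m^{a_s}_t$ now carry the uniform-in-$q$ volume bound \eqref{ineq:m^a_s_t} rather than being probabilities, which is harmless since the comparison \eqref{eq:comparison} is a pointwise (in $(a_s,t)$) identity of measures and the bound only enters to guarantee integrability when reassembling.
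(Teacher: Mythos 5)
Your proposal captures the same key ingredients as the paper's proof: the derivative $\partial_t \Phi^t_s$ is the Jacobian relating the two slicings, the comparison goes through a Fubini/coarea argument along rays, positivity comes from the two-sided Dini-derivative bound in \cref{prop:derivative of Phi}, and uniqueness of (strongly) consistent disintegrations pins down \eqref{eq:comparison}, with the endpoint $t=s$ handled separately via $\partial_t|_{t=s}\Phi^t_s = \ell^2_s$. The execution in the paper is packaged differently, however. Rather than erecting a formal two-parameter disintegration over the $(a_s,t)$-plane, the paper fixes $t$, lifts everything to $\mathbb{R}\times X$ via the operators $1^1_{a_s}$ (recording the level $a_s$) and $1^2_x$ (recording the point), ``sums'' the $L^1$-conditionals $\m^{a_s}_t$ over $a_s$, mollifies by a thick slab $\{\,|\tau - t| < \epsilon\,\}$ and divides by $2\epsilon$, applies Fubini to rewrite the time-integral as a $\mathcal{L}^1$-integral over $\{\Phi^\tau_s(x) : \tau\in(t-\epsilon,t+\epsilon)\cap G(x)\}$ ray by ray (this is the coarea step you had in mind), and then passes $\epsilon\to 0$ using the a.e.-differentiability of $\tau\mapsto\Phi^\tau_s(x)$, the uniform mass bound \eqref{ineq:m^a_s_t}, and continuity of $\tau\mapsto\m^{a_s}_\tau$. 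Testing against functions $1\otimes f\in C_b(\mathbb{R}\times X)$ and invoking uniqueness of the $L^2$-disintegration of $\m\llcorner e_t(G)$ then gives \eqref{eq:comparison}. One caveat for your version worth flagging: a ``disintegration of $\m\llcorner e_{[0,1]}(G)$ over the two-dimensional quotient $(a_s,t)$'' is not literally available on $X$, since even under \cref{convention:goodcurves} a point $x$ may be hit at two distinct times $t\neq t'$ by two distinct rays, so $(a_s,t)$ is not a function of $x$ alone. The paper's lift to $\mathbb{R}\times X$ is precisely the device that makes the two-parameter bookkeeping rigorous; once you lift, your change-of-variables picture matches the paper's mollification computation.
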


\begin{proof}[Sketch of proof]
 By \cref{prop:derivative of Phi}, for any $x \in X$, $\partial_t \Phi^t_s (x)$ exists for $\mathcal{L}^1$-a.e. $t \in  G_{\varphi}(x)$ including $t = s$, and wherever differentiable, $\partial_t \Phi^t_s (x)>0$. 
 Then the statement on differentiability can be concluded by applying Fubini to the set $\{(x, t) : \exists \gamma \in G_{\varphi}, \gamma_t = x \} \subset X \times(0, 1)$ to rephrase the exceptional set of differentiation, together with the disintegration $\m\llcorner e_t(G)=\int\mathfrak{m}^t_{a_s}\L^1(\d a_s)$.

Now we start to show the equivalence.
The main idea is a ``sum-up'' of $\mathfrak{m}^{a_s}_t$ for all $a_s \in\varphi_s (e_s (G))$ to recover $\mathfrak{m} \llcorner e_t (G)$ (which has a disintegration $\int\mathfrak{m}^t_{a_s}\L^1(\d a_s)$) and so to connect the two families of conditional measures.
  
For $t_0 \in \mathbb{R}$ and $x_0 \in X$, define
\begin{equation}
  1^1_{t_0} : X \ni x \mapsto (t_0, x) \in \mathbb{R} \times X, \quad
  1^2_{x_0} : \mathbb{R} \ni t \mapsto (t, x_0) \in \mathbb{R} \times X.
  \label{eq:lift operator}
\end{equation}
Recall that $\m^{a_s}_t$ is supported inside a common compact set (e.g. $e_{[0,1]}(G)$) for all $t$ and $a_s$, having uniformly bounded mass by \eqref{ineq:m^a_s_t}.
Using the following variant of \eqref{disinte formula 3}
\begin{equation}
\mathfrak{m} \llcorner e_{(t-\epsilon,t+\epsilon)}(G_{a_s})=\int_{(t-\epsilon,t+\epsilon)} \mathfrak{m}^{a_s}_{\tau}\mathcal{L}^1 (\d \tau),
\end{equation}
and the continuity of $\tau\mapsto \m^{a_s}_{\tau}$, we have 
\begin{equation}
     \int_{\varphi_s (e_s (G))} (1^1_{a_s})_{\#} \mathfrak{m}^{a_s}_t\mathcal{L}^1 (\d a_s) =\underset{\epsilon \rightarrow 0}{\lim} \int_{\varphi_s (e_s (G))} \frac{1}{2 \epsilon}
    (1^1_{a_s})_{\#} \mathfrak{m} \llcorner e_{(t - \epsilon, t + \epsilon)} (G_{a_s}) \mathcal{L}^1 (\d a_s),
\end{equation}
under the weak topology. Manipulating the right-hand side via Fubini and functions $(\Phi^t_s)_t$ as in the proof of \cite[Theorem 11.3]{cavalletti2021globalization}, one gets
\begin{equation}
    \int_{\varphi_s (e_s (G))} (1^1_{a_s})_{\#} \mathfrak{m}^{a_s}_t\mathcal{L}^1 (\d a_s)=\underset{\epsilon \rightarrow 0}{\lim} \int_{e_t (G)}
    \frac{1}{2 \epsilon} (1^2_x)_{\#} (\mathcal{L}^1 \llcorner\{\Phi^{\tau}_s (x) : \tau \in (t - \epsilon, t + \epsilon) \cap G(x)\}) \mathfrak{m} (\d x),
\end{equation}
where we only need the uniform boundedness of mass of measures
  \begin{equation} \label{eq:one-dimension measure on R}
    \frac{1}{2 \epsilon} \mathcal{L}^1 \llcorner \{\Phi^{\tau}_s (x) : \tau\in (t -\epsilon, t + \epsilon) \cap G(x)\}
  \end{equation}
  for all $\epsilon$, which is ensured by \cref{prop:derivative of Phi} and the compactness of $G$. 
  Besides, since $t\mapsto \Phi^t_s (x)$ is a strictly monotone Lipschitz function on $G(x)\cap (t-\epsilon,t+\epsilon)$ (with a uniform Lipschitz bound for $x\in e_t(G)$), one-dimensional measures in \eqref{eq:one-dimension measure on R} converge to $\partial_t \Phi^t_s(x) \delta_{\Phi^t_s (x)}$ when $\epsilon\to 0$, for  $\mathcal{L}^1$-a.e. $t \in (0, 1)$ and $\mathfrak{m}$-a.e. $x \in e_t(G)$.

  As a result, for $\L^1$-a.e. $t$ and $a_s$,
  \begin{equation}
       \int_{\varphi_s (e_s (G))} (1^1_{a_s})_{\#} \mathfrak{m}^{a_s}_t\mathcal{L}^1 (\d a_s) = \int_{e_t (G)} (1^2_x)_{\#} (\partial_t \Phi^t_s (x)\delta_{\Phi^t_s (x)})\m(\d x).
  \end{equation}
Testing the above equality by $1 \otimes f \in C_b (\mathbb{R} \times X)$ with the disintegration \eqref{eq:IntroL2disint} of $\m\llcorner e_t(G)$ implies 
  \begin{equation}\label{eq:proofequiv1}
      \int_{\varphi_s (e_s (G))} \mathfrak{m}^{a_s}_t \mathcal{L}^1(\d a_s) = \int_{\varphi_s (e_s (G))} \partial_t \Phi^t_s \cdot\m^t_{a_s} \mathcal{L}^1 (\d a_s).
  \end{equation}
  Actually, disintegrations on both sides of \eqref{eq:proofequiv1} are strongly consistent on $\{e_{t}(G_{a_s})\}_{a_s}$ of $e_t(G)$, and hence \eqref{eq:comparison}.
  The assertion for $t=s$ can be proven exactly the same as \cite[Theorem 11.3]{cavalletti2021globalization}, since the underlying space verifies $\mcp(K,N)$ by \cref{prop:cd1tomcp} and $\ell(\gamma)$ is uniformly bounded by the compactness of $G$.
\end{proof}

\subsection{Proof of the main theorem}
Once all disintegrations and the comparison are produced, a so-called change-of-variable formula (cf. Equation (11.10) in \cite{cavalletti2021globalization}) can be derived.
The remaining part after that, though highly technical, not related to the finiteness of $\m$, will follow naturally.
Here we outline the proof of the main theorem in the locally finite case, closely following Section 11.2, 12 and 13.1 of \cite{cavalletti2021globalization}.

\begin{proof}[Proof of \cref{thm:cd1tocd}] 
\textbf{Deriving the change-of-variable formula.} 
First, consider any $G$, as a compact subset of $G^+_{\varphi}$ with positive $\nu$-measure. 
Fix $s\in(0,1)$.
As mentioned in the end of \cref{section:L2disint}, for every $t\in(0,1)$, $\mathcal{L}^1$-a.e. $a_s \in \varphi_s (e_s (G))$, $\rho_t \cdot \mathfrak{m}_{a_s}^t =(e_t)_{\#} \nu_{a_s}$.
By evaluating both of them to $e_t(H)$ for an arbitrary Borel $H\subset G$, we have 
  \begin{equation}
    \int_{e_t (H)} \rho_t (x) \cdot \mathfrak{m}^t_{a_s} (\d x)=\nu_{a_s} (H).
  \end{equation}
In the above integral, replacing $\m^t_{a_s}$ by $(\partial_t
  \Phi^t_s)^{- 1} \mathfrak{m}^{a_s}_t$ using \cref{prop:comparion of measure}, and combining the translation formula \eqref{eq:transitioncondim}, we have
  \begin{align}\label{eq:integrand}
      \int_{e_s (H)} \underbrace{(\rho_t \cdot(\partial_t \Phi^t_s)^{- 1}) \circ g^{a_s}
    (\beta, t) h^{a_s}_{\beta} (t)}_{\coloneqq f_{t}(\beta)}\mathfrak{m}^{a_s}_s (\d \beta)=\nu_{a_s} (H),
  \end{align}
  for $\mathcal{L}^1$-a.e. $t \in (0, 1)$ including $t=s$ and $a_s \in \varphi_s (e_s (G))$.
  Denote by $f_{t}(\beta)$ the integrand in \eqref{eq:integrand}.
 From the arbitrariness of $H$, there is a subset $T\subset [0,1]$ of full measure s.t. for each $t\in T$, $f_{t}=f_{s}$ for $\m^{a_s}_{s}$-a.e. $\beta$ (due to the continuity of $\rho_t(\cdot)$, $h^{a_s}_{\beta}(\cdot)$ and $g^{a_s}(\beta,\cdot)$, and the fact that $\partial_t\Phi^t_s(x)$ converges to $\ell^2_s(x)$ when $t\to s$ by \cref{prop:derivative of Phi}).
  Recall that $h^{a_s}_{\beta}(s)=1$ for $\m^{a_s}_{s}$-a.e. $\beta$ and $g^{a_s}(\cdot, s)=id$.
  Hence, for $\L^1$-a.e. $t$,
  \begin{equation}\label{eq:a.e.integrand}
      f_{t}(\beta)=(\rho_t \cdot(\partial_t \Phi^t_s)^{- 1}) \circ g^{a_s}
    (\beta, t) h^{a_s}_{\beta} (t)=f_{s}(\beta)=\rho_{s}(\beta)/\ell^2_s(\beta),
  \end{equation}
  for $\m^{a_s}_{s}$-a.e. $\beta\in e_s (G)$.
  Again by \cref{prop:comparion of measure},
  $\mathfrak{m}^{a_s}_s$ and $\mathfrak{m}_{a_s}^s$ are mutually absolutely continuous, so \eqref{eq:a.e.integrand}
  holds for $\mathfrak{m}_{a_s}^s$-a.e. $\beta $ as well. 
  Further, the validity of \eqref{eq:a.e.integrand} for almost each $a_s$ indicates, after recovering $\m\llcorner e_s(G)$ by disintegration $\int\m^s_{a_s} \mathcal{L}^1(\d a_s)$, that \eqref{eq:a.e.integrand} holds for $\m$-a.e. $\beta = \gamma_s$ with $\gamma \in G$.

  In conclusion, after changing the variable $\beta$ to $\gamma_s$, for $\nu$-a.e. $\gamma \in G$, and $\mathcal{L}^1$-a.e. $t \in (0, 1)$, we have
\begin{equation}\label{eq:cov}
    \frac{\rho_s(\gamma_s)}{\rho_t(\gamma_t)}=\frac{ h_{\gamma_s}^{\varphi_s (\gamma_s)} (t)}{\partial_{\tau}|_{\tau=t}\Phi_s^{\tau}(\gamma_t)/\ell^2(\gamma)}.
\end{equation}
 Recall from the construction in \cref{prop:L^1disonG} that, $\check h_{\beta}^{a_s} (t)$ is uniquely defined as the continuous density of $\hat\m^{a_s}_{q}$ (given by the $L^1$-disintegration \eqref{eq:proofL1dis1} of $\mathcal{T}_u$) after conditioning it on $e_{[0, 1]} (\gamma^q)$ and pulling it back to the interval $[0, 1]$ via the ray map $g^{a_s}$ ( which can be defined on the whole $e_s(G_{\varphi}^+)\times [0,1]$ by \cref{convention:goodcurves}).
 In particular, $h^{a_s}_{\beta}$ and hence \eqref{eq:cov} does not depend on the choice of $G$.
 Then by the inner regularity of $\nu$, the validity of \eqref{eq:cov} holds for 
 $\nu$-a.e. $\gamma\in G^{+}_{\varphi}$.
 
 \textbf{``L-Y" decomposition of the density along $\gamma\in G_{\varphi}^+$.}
 We show that along each $\gamma$ satisfying \eqref{eq:cov}, the density admits a decomposition $\rho_t(\gamma_t)^{-1}=L(t)Y(t)$, where $L$ is concave and $Y$ is a $\cd(\ell^2(\gamma)K,N)$ density on $(0,1)$.
 
 All steps in the proof of \cite[Theorem 12.3]{cavalletti2021globalization} can be repeated since it is only a matter of one-dimensional analysis on $[0,1]$.
 Once we check that condition (C) is satisfied in the statement of \cite[Theorem 12.3]{cavalletti2021globalization} (the validity of (A) and (B) is clear by \cref{convention:goodcurves} and \cref{prop:L^1disonG}).
 Indeed, the condition is reduced to an estimate of the 3-rd order derivative of $t\mapsto\varphi_t(\gamma_t)$, where no difference occurs between finite and locally finite spaces.

 Afterwards, an application of H\"older's inequality (cf. \cite[Theorem 13.2]{cavalletti2021globalization}) to the ``L-Y" decomposition, with the upper semi-continuity of $t\mapsto \rho_t(\gamma_t)$ at $t=0,1$ from \cref{convention:goodcurves}, yields the desired inequality \eqref{eq:CDKNintro}.

 \textbf{On null-geodesics.}
 Denote by $G_{\varphi}^0$ the set of all curves in $G_{\varphi}$ with zero length and $X_0\coloneqq e_{[0,1]}(G^0_{\varphi})$.
 By \cite[Corollary 9.8]{cavalletti2021globalization}, as a consequence of \cref{cor:densityconti}, $\mu_t\llcorner X_0=\mu_0\llcorner X_0$ for all $t\in [0,1]$.
 As a result, same to the step 0 of \cite[Theorem 11.4]{cavalletti2021globalization} we can always redefine $\rho_t\llcorner X_0\coloneqq \rho_0\llcorner X_0$ so that \eqref{eq:cov} holds automatically over $\gamma\in G^0_{\varphi}$ and $t\mapsto\rho_t(\gamma_t)$ will not be affected for all $\gamma\in G_{\varphi}^+$.
 \end{proof}

\appendix
\section{Proof of Ray Decomposition}\label{sec:app}
\begin{definition} 
A non-negative function $h$ on an interval $I\subset \R$ is called a $\cd(K,N)$ density if for all $x_0,
  x_1 \in I$ and $t \in [0, 1]$:
\begin{equation}\label{eq:CDKNdensity}
    h (t x_1 + (1 - t) x_0)^{\frac{1}{N - 1}} \geq \sigma_{K, N - 1}^{(t)}
    (|x_0 - x_1 |) h (x_1)^{\frac{1}{N - 1}} + \sigma_{K, N - 1}^{(1 - t)}(|x_0 - x_1 |) h (x_0)^{\frac{1}{N - 1}}. 
\end{equation}
\end{definition}
The name comes from the fact that a $1$-dimensional m.m.s. $(I,|\cdot|,\mu)$ verifies $\cd(K,N)$ if and only if $\mu\ll\L^1$ and the density $h=\d\mu/\d\L^1$ has a version being a $\cd(K,N)$ density (see \cite[Theorem A.2]{cavalletti2021globalization}). Moreover, if $h\in C^2_{\loc}(I)$, then $h$ is a $\cd(K,N)$ density if and only if
\begin{equation}\label{eq:AppendCD}
    \frac{((\log h)')^2}{N - 1} +
  (\log h)'' \leq - K.
\end{equation}
  
We call a property on $I \subset \mathbb{R}$ \emph{local} if
once it holds on an interval $I_x$ of any point $x \in I$, then it holds
globally on $I$. In particular, being positive, locally
Lipschitz or a $\cd (K, N)$ density are all local properties in $\mathbb{R}$ (see {\cite[Section 5]{cavalletti2012local}} for the local-to-global
property of $\cd (K, N)$ densities).

\begin{proof}[Completion of the Proof of \cref{thm:cd_locTocd_1}]
  Via the ray map $g$ introduced in \eqref{eq:raymap}, $\m\llcorner \mathcal{T}^b_u$ can be reformed as a measure on $S \times \mathbb{R}$:
  \begin{equation} 
    (g^{- 1})_{\#} (\m \llcorner \mathcal{T}^b_u) = (g^{- 1})_{\#}
     \m \llcorner (\mathfrak{Q}^{- 1} (S)) = \int_S (g^{- 1} (q,
     \cdot))_{\#} \m_q \mathfrak{q} (\d q), 
     \end{equation}
  where the second equality is guaranteed by the strong consistency of disintegration $q\mapsto \m_q$. As $(g^{- 1} (q,
  \cdot))_{\#} \m_q$ is locally-finite on $\mathbb{R}$ from \eqref{eq:normalziedfctf}, Lebesgue's decomposition gives
  \begin{equation} (g^{- 1} (q, \cdot))_{\#} \m_q = h_q \mathcal{L}^1 + \omega_q,
     \quad \omega_q \perp \mathcal{L}^1 . \end{equation}
  
  ({\romannumeral 1}) It suffices to show for $\mathfrak{q}$-a.e. $q \in S_{a, b}$, $\m_q$ verifies \cref{thm:cd_locTocd_1} on $[a, b]$, where 
  \[
  S_{a, b} \coloneqq \{ q \in S :[a,b]\subset I_q, a,b\in\mathbb{Q} \}.
  \]
  Such $S_{a,b}$ is always $\q$-measurable, since 
  \[
  S_{a,b}= P_1(\{(x,y,z)\in S^3:(x,y),(x,z)\in R_u,u(y)-u(x)\geq b, u(z)-u(x)\leq a\}).
  \]
  Let $S_{a,b}^{*}$ be the set of all $q$ in $S_{a,b}$ s.t. \cref{thm:cd_locTocd_1} is violated somewhere in $[a,b]$. 
  As all statements are local, the set $Q^{*}\coloneqq\{q\in S:\m_q\text{ does not verify \cref{thm:cd_locTocd_1}}\text{ on }I_q\}$ is contained in $\cup_{a,b\in \mathbb{Q}}S^{*}_{a,b}$.  
  
  It can be reduced to show each $S^*_{a,b}$ is negligible and hence for the time being, we assume $S$ a bounded subset of $S_{a, b}$. 
  For simplicity, we directly assume $\m$ a measure on $S \times\mathbb{R}$ to avoid writing $g$ all the time. 
  
  ({\romannumeral 2}) Prove that $\m_q \ll\mathcal{L}^1$. 
  If otherwise, there exists a bounded set $A \subset \mathcal{T}^b_u \subset S
  \times \mathbb{R}$, $\m (A) > 0$ but for $\mathfrak{q}$-a.e. $q
  \in S$
  \begin{equation} \label{eq:vanishingmassonray}
    \mathcal{L}^1 (A \cap (\{ q \}\times\mathbb{R})) = 0.
  \end{equation}
  Take $\nu$ to be the unique optimal dynamical plan transporting $\mu_0 \coloneqq
  \m (A)^{- 1} \m \llcorner A$ onto $S \times \{ a \}$ along vertical rays
  $R^b_u (q) = \{ q \} \times I_q$. Denote $A_t \coloneqq e_t (\spt
  (\nu))$,
  \begin{equation}
    A_t = \{ (q, (1-t)\tau + ta) : (q, \tau) \in A \} .
    \label{eq:expressAt}
  \end{equation}
  \cref{cor:densityconti} ensures $\m (A_t) > 0$ for each
  $t \in [0, 1)$, so a contradiction follows:
   \begin{eqnarray*}
    0 & < & \int_0^{1 / 2} \m (A_t) \d t =\m \otimes
    \mathcal{L}^1 (\{ (q, \tau, t) \in S \times \mathbb{R} \times [0, 1 / 2] :
    (q, \tau) \in A_t \})\\
    & = & \int_{S \times \mathbb{R}} \mathcal{L}^1 (\{ t \in [0, 1 / 2] : (q,
    \tau) \in A_t \}) \m (\d q \d \tau)\\
    & \overset{\eqref{eq:expressAt}}{=} & \int_{S \times
    \mathbb{R}} \mathcal{L}^1 (\{ t\in [0, 1 / 2]:(q,\frac{\tau - at}{1 - t})\in A \})\m(\d q \d
    \tau) \overset{\eqref{eq:vanishingmassonray}}{=} 0.
  \end{eqnarray*}
  
  ({\romannumeral 3}) Prove that $h_q$ is positive and Lipschitz.
  For any $[a_0, b_0] \subset [a, b]$ and $S_0 \subset S$ with $\mathfrak{q}
  (S_0) > 0$, take $A$ as a bounded subset of $\mathcal{T}^b_u \cap (S_0 \times\R)$ having positive mass and consider the transport optimally moving $\m (A)^{- 1}\m\llcorner A$ into $S \times\{ (a_0 + b_0)/2\}$. 
  As in ({\romannumeral 2}), once $\m (A) > 0$, $\m (A_t) > 0$ for all $t \in
  (0, 1)$. One can easily find $A_t \subset S_0 \times [a_0, b_0]$ for some
  time $t$ from the boundedness of $A$, ensuring $\m (S_0 \times [a_0, b_0]) > 0$.
  
  Next, taking any such $A = S_0 \times [a_0, b_0]$, apply \eqref{eq:continuous mass of evolution set} to the optimal dynamical
  plan $\nu$ transporting $\mu_0 \coloneqq \m (A)^{- 1}\m\llcorner A$ into $S \times \{ b \}$. 
  By disintegration, we have
  \begin{equation} 
  \m (A_t) = \int_{S_0} \int_{[a_0, b_0]_t} h_q
     \d\mathcal{L}^1  \d\mathfrak{q}, \quad [a_0, b_0]_t \coloneqq [(1
     - t) a_0 + t b, (1 - t) b_0 + t b] . \end{equation}
  Given any $0 \leq r < s < 1$, the arbitrariness of $S_0$ implies, for
  $\mathfrak{q}$-a.e. $q \in S$
  \begin{equation}
    c (r, s) \int_{[a_0, b_0]_r} h_q \d\mathcal{L}^1 \leq \int_{[a_0, b_0]_s}
    h_q \d\mathcal{L}^1 \leq C (r, s)  \int_{[a_0, b_0]_r} h_q \d\mathcal{L}^1,
    \label{eq:ineqhq}
  \end{equation}
  where $c (r, s), C (r, s)$ are locally Lipschitz functions of $r, s$ given
  by \eqref{eq:continuous mass of evolution set}. Since $h_q$
  (or $\m_q$) is locally finite, both sides of \eqref{eq:ineqhq} continuously depend on $s, r, a_0, b_0$, and so \eqref{eq:ineqhq} holds simultaneously for all $r, s, a_0,
  b_0$. At Lebesgue points $\tau_0, \tau_1$ of $h_q$, choosing $[a_0, b_0]
  = [\tau_0 - \varepsilon, \tau_0 + \varepsilon]$, $r = 0$ and $s =
  \frac{\tau_1 - \tau_0}{b - \tau_0}$ in \eqref{eq:ineqhq} and
  shrinking $\varepsilon \rightarrow 0$, a two-sided inequality between $h_q
  (\tau_0)$, $h_q (\tau_1)$ follows, leading to the Lipschitz continuity.
  
  Finally, because $c (r, s), C (r, s)$ are positive for all $s, r$, the
  continuous density $h_q$ is either identically 0 or everywhere positive
  inside $I_q$. But the positivity of all $\m (S_0 \times [a, b])$
  excludes the former case (up to a $\mathfrak{q}$-negligible set of $q$).
  
  ({\romannumeral 4}). Prove that $h_q$ is a $\cd (K, N)$ density.
  Consider, any $a < A_0 < A_1 < b$ and $L_0, L_1 > 0$ with $A_0 + L_0 < A_1$
  and $A_1 + L_1 < b$. Define
  \begin{equation} \mu_0 \coloneqq \int_S \frac{1}{L_0} \mathcal{L}^1 \llcorner [A_0, A_0 +
     L_0] \mathfrak{q} (\d q), \qquad \mu_1 \coloneqq \int_S \frac{1}{L_1}
     \mathcal{L}^1 \llcorner [A_1, A_1 + L_1] \mathfrak{q} (\d q) . \end{equation}
  In ({\romannumeral 3}), we have shown the positivity of $h_q$, so densities of $\mu_i$
  w.r.t. $\m$ are
  \begin{equation} 
  \rho_i ((q, t))=\frac{1}{L_i} h_q (t)^{- 1},\quad \forall t \in [A_i,
     A_i + L_i],i = 0, 1. 
     \end{equation}
  When $L_0$ and $A_1 + L_1$ are close enough (up to further localizing $S$),
  we can apply $\cd_{\loc} (K, N)$ to the optimal dynamical plan
  between $\mu_0$ and $\mu_1$. 
  Then \eqref{eq:CDKNdensity} follows by the same argument in {\cite[Theorem 4.2]{CavallettiMondinoInven}}.
\end{proof}

\noindent \textbf{Acknowledgement.}
The author would like to thank Karl-Theodor Sturm for supervision during this project as well as Matthias Erbar and Timo Schultz for valuable comments.

\bibliographystyle{amsplain}
\bibliography{bib.bib}
\end{document}